\newbox\barleftbox
\newbox\barrightbox
\newproof{proof}{Proof}
\newcommand{\leftstrip}[1]{%
   \valign{##\cr
           \leaders\copy\barleftbox\vfill\cr
           \vbox{\hsize\marginparwidth\advance\hsize-8pt
                 \raggedright\sffamily\footnotesize #1}\cr
   }
}
\newcommand{\rightstrip}[1]{%
   \valign{##\cr
           \vbox{\hsize\marginparwidth\advance\hsize-8pt
                 \raggedright\sffamily\footnotesize #1}\cr
           \leaders\copy\barrightbox\vfill\cr
   }
}
\let\oldmarginpar\marginpar
\renewcommand\marginpar[1]{%
  \oldmarginpar[\leftstrip{#1}]{\rightstrip{#1}}}
\newtheorem{theorem}{Theorem}[section]
\newtheorem{lemma}[theorem]{Lemma}
\newtheorem{e-proposition}[theorem]{Proposition}
\newtheorem{corollary}[theorem]{Corollary}
\newtheorem{e-definition}[theorem]{Definition}
\newtheorem{remark}{Remark}[section]
\newtheorem{example}{Example}[section]
\newtheorem{theoreme}{Th\'eor\`eme}[section]
\newtheorem{definition}[theoreme]{Definition}
\numberwithin{equation}{section}
\long\def\salta#1{\relax}
 \def\1{\raisebox{2pt}{\rm{$\chi$}}}
\def\intr{\int_{B_{R}}}
\def\z{{z}}
\def\up{u_{p}}
\def\rife#1{(\ref{#1})}
\newcommand{\hide}[1]{}
\newcommand{\R}{{\mathbb R}}
\renewcommand{\H}{{\mathcal H}}
\renewcommand{\epsilon}{{\varepsilon}}
\newcommand{\loc}{\mathrm{loc}}
\newcommand{\nada}[1]   {}
\newcommand{\res}               {\!\!\mathop{\hbox{
                                \vrule height 7pt width .5pt depth 0pt
                                \vrule height .5pt width 6pt depth 0pt}}
                                \nolimits}
\def\div{{\rm div}}
\def\bk{\color{black}}
\def\rn{\mathbb{R}^{N}}
\def\re{\mathbb{R}}
\def\be{\begin{equation}}
\def\ee{\end{equation}}
\def\vare{\varepsilon}
\def\dive{{\rm div}}
\def\into{\int_{\Omega}}
\def\intq{\int_Q}
\def\w-1p'{W^{-1,p'}(\Omega)}
\def\w-1pd{W^{-1,p'}(D)}
\def\pw-1p'{L^{p'}(0,T;W^{-1,p'}(\Omega))}
\def\l{\textsl{L}}
\def\dys{\displaystyle}
\def\lp'n{(L^{p'}(\Omega))^{N}}
\def\car#1{\raise2pt\hbox{$\chi$}_{#1}}
\def\lp'n{(L^{p'}(\Omega))^{N}}
\def\t1p0{T^{1,p}_{0}(\Omega)}
\def\w-1p'{W^{-1,p'}(\Omega)}
\def\pw-1p'{L^{p'}(0,T;W^{-1,p'}(\Omega))}
\def\lil2{L^{\infty}(0,T;L^2 (\Omega))}
\def\l2h10{L^2 (0,T ; H^1_0 ( \Omega ))}
\def\m2{M^{\frac{N(p-1)}{N-1}}(\Omega)}
\def\l{\textsl{L}}
\def\vare{\varepsilon}
\def\into{\int_{\Omega}}
\def\intq1{\displaystyle \int_{\Omega \times (0, 1)}}
\def\dys{\displaystyle}
\def\m{\noalign{\medskip}}
\begin{document}

\title{Large solutions for the elliptic $1$-Laplacian with absorption}

\date{\today}
\author[UV]{Salvador  Moll}
\ead{j.salvador.moll@uv.es}

\author[sbai]{Francesco  Petitta\corref{cor1}}
\ead{francesco.petitta@sbai.uniroma1.it }

\address[UV]{
Departament d'An\`{a}lisi Matem\`atica,
Universitat de Val\`encia, Valencia, Spain.}
\address[sbai]{Dipartimento S.B.A.I,
Sapienza, Universit\`{a} di Roma, Italy}

\cortext[cor1]{Corresponding author}

\begin{abstract}
In this paper we give a general condition on the absorption term of the 1-Laplace elliptic equation for the existence  of suitable large solutions. This condition can be considered as the correspondent Keller-Osserman condition for the $p$-Laplacian, in the case $p=1$. We also provide  conditions that guarantee uniqueness for solutions to such problems.
\end{abstract}

\begin{keyword}Large solutions \sep $1$-Laplacian  \sep Keller-Osserman Condition \sep Maximal solutions \MSC{ 35J25, 35J61,
35J92.}\end{keyword}

\maketitle

\tableofcontents

\section{Introduction}

The study of both existence and uniqueness of large solutions for nonlinear partial differential equations with absorption goes back to the pioneering papers by Keller (\cite{ke}) and Osserman (\cite{os}). Starting from there a huge literature has been devoted to the study of such a problems in particular for their connection to several branches of   mathematics as Differential Geometry, Probability, and Control Theory (see for instance \cite{ln,dy,lg,ll}).

From the mathematical point of view the main idea is that, roughly, the existence of solutions (to a  certain  PDE) that blow up on the boundary of a domain  is strictly related to the absorption role played by suitable lower order terms. Far to provide a complete list of references we refer to \cite{dl,mat,mm,gp,dls,vero,gr,lp,lieber, MP} and references therein for a review on the subject.   To be a bit more precise, let $\Omega$ be a bounded {domain} of $\R^N$ with Lipschitz boundary. The existence of a large solution for problem
\begin{equation}\label{p-laplace-u-q}
\begin{cases}
   \Delta_p u  = u^q  & \text{in}\ \Omega,\\
 u=+\infty &\text{on}\ \partial\Omega,
  \end{cases}
\end{equation}
can be proved provided $q>p-1$ (see for instance \cite{dl}).

For a general increasing and continuous nonlinearity $f$, with $f(0)=0$, problem \begin{equation}
  \label{p-laplace-f} \begin{cases}
   \Delta_p u  = f(u)  & \text{in}\ \Omega,\\
 u=+\infty &\text{on}\ \partial\Omega,
  \end{cases}
\end{equation} has a solution if and only $f$ satisfies the so-called Keller-Osserman condition; \begin{equation}
  \label{KOp} \int_1^\infty \frac{1}{F(s)^{\frac{1}{p}}}\,ds<+\infty\,,
\end{equation} with $F(s)=\int_0^s f(t)\,dt$.

\medskip

In this paper we deal with both existence and uniqueness of solutions to the $1$-Laplace problem  with absorption \begin{equation}
  \label{maineq} \left\{\begin{array}
    {cc} \displaystyle \Delta_{1} u :=\div\left(\frac{Du}{|Du|}\right)=f(u)\quad & {\rm in \ } \Omega \\ \\ u=+\infty & {\rm on \ }\partial\Omega
  \end{array}\right.
\end{equation} under different conditions on both $f$ and $\Omega$. {The study of problems involving 1-Laplace type operators arises, for instance, in the study of image restoration as well as in some optimal design problems in the theory of torsion (see for instance \cite{ka,Sapiro,ACMBook} and references therein for a review on the main applications).
A systematic  mathematical study of this type of problems began with the works \cite{ACM-N,ACM} and a comprehensive treatment of the $1$-Laplace diffusive term can be found in the monograph \cite{ACMBook}.}

\medskip
We will call our solutions \emph{Large Solutions} in order to be consistent with the existing literature,  though, as we will see, the  boundary condition $u=+\infty$ should be understood in a very weak sense (see also Remark \ref{largemax} later). In fact, as we will see, depending on the assumptions (to be specified later) on $f$ and $\Omega$, very different situations occur: large solutions turn out to be, in many cases, globally bounded and they can be regarded as \emph{maximal solutions} for the equation in \rife{maineq}.    Anyway, depending, respectively,  on the boundary regularity of $\Omega$ and on the behavior of  $f$, then the value $+\infty$ can be attained at some points (Remark \ref{infinito}) of $\partial\Omega$ {or} maximality can be lost (Remark \ref{maxi}). {We want to stress that this range of phenomena  is intrinsic to the 1-Laplacian operator. In fact, as first observed in Osher and Sethian's celebrated paper \cite{OsSe} (see also the monograph \cite{OsFe}), the 1-Laplacian operator is closely related to the mean curvature operator in the following way: consider the surface given by the level set $\{u(x)=k\}$; then  its unit normal is formally given by ${\bf n}(x)=\frac{Du}{|Du|}$. Therefore, the mean curvature of the surface at the point $x$ is formally given by $${\bf H}(x)=\div ({\bf n})(x)=\div \left(\frac{Du}{|Du|}\right)(x);$$i.e. the 1-Laplacian operator. This relationship clearly shows that the behavior at the boundary $\partial \Omega$ of the solutions to \eqref{maineq} might depend on the mean curvature of the boundary itself, and in particular, as we will see, on its boundedness.}

\medskip
Though both Neumann and Dirichlet  1-Laplace  type problems with absorption terms have been considered (see for instance \cite{de1} and references therein)  we want to stress that, as a by-product  of our arguments, existence and uniqueness for suitable nonhomogeneous Dirichlet boundary value problems will also be obtained (see Remark \ref{existencedirichlet}), some of these, to our knowledge,  being missed in the existing literature.

\medskip

 Let us describe the structure of the paper. \bk After a preliminary section where we recall some basic definitions and results,  and we define the notation we shall use throughout the paper, in Section \ref{sec.unique} we define a notion of solution to \eqref{maineq} for the case of $\Omega$ having a Lipschitz boundary and under very general assumptions on the nonlinearity $f$. We prove that, if it exists, this type of solution is maximal among all distributional solutions and, as a consequence, we obtain uniqueness for such solutions.

In Section \ref{sec.p1} we give the existence result for solutions to problem \eqref{maineq}.
The formal idea is the following one: after the change of variables $v=f(u)$, then, using the homogeneity of the 1-Laplacian operator,  \eqref{maineq} formally transforms into \begin{equation}
  \label{maineqv} \left\{\begin{array}
    {cc} \displaystyle \Delta_{1}v =v\quad & {\rm in \ } \Omega \\ \\ v=+\infty & {\rm on \ }\partial\Omega
  \end{array}\right.
\end{equation}
We show that this procedure is correct in case of the domain satisfying a uniform interior ball condition or being a convex body.

Existence and uniqueness of solutions to \eqref{maineqv} can be immediately derived from the recent work \cite{MP}, where the main issue is the study of both existence and uniqueness of large solutions for parabolic problems without lower order absorption terms whose model is
 \begin{equation}\label{maine}
\begin{cases}
    u_t=\Delta_p u  & \text{in}\ Q_T,\\
    u=u_0  & \text{on}\ \{0\} \times\Omega,\\
 u=+\infty &\text{on}\ (0,T)\times\partial\Omega,
  \end{cases}
\end{equation}
where $u_0\in L^1_{loc}(\Omega)$ is a nonnegative function and $\Omega$ is a bounded open subset of $\rn$ with smooth boundary and $1\leq p<2$. In this case, the na\"ive  idea is that, clearly, the term $u_{t}$ plays itself an absorption role of growth {of order}$1$ and this allows to avoid, for instance, the interior blow up of the approximating solutions, at least, for small $p$.

 The results about \eqref{maineqv} allow us to  show that \eqref{maineq} has a unique solution in the case of $f$ being an increasing function defined on $\re$ with $f^{-1}$ locally Lipschitz continuous in $]0,+\infty[$. This assumption takes the place of the Keller-Osserman condition for higher growth operators. As a matter of fact,  the $1$-Laplacian is singular enough to play {an absorption role by} itself, thanks to its homogeneity property. So that, though, as we said, solutions can attain the values $+\infty$ on the boundary, essentially no assumptions are needed on the behavior of $f(s)$ at infinity.

Finally, in Section \ref{Secpto1} we prove a stability result. Solutions to  \rife{maineq}, in the  case of  $\Omega$ being of class $C^2$ and  under natural Keller-Osserman type conditions on $f$ (that will be specified later), can be obtained through a  stability procedure by taking the limit as $p\to 1^{+}$ in \eqref{p-laplace-f}. 

\medskip
{ Summarizing, the different hypotheses on $f$ we will use are the following:
\begin{itemize}
  \item[$(H_1)$.] $f$ is increasing.
  \item[$(H_2)$.] $f$ is increasing, with $f^{-1}$ increasing and locally Lipschitz continuous in $]0,+\infty[$.
  \item[$(H_3)$.] $f$ is  {continuous and} increasing, $f(0)=0$ and it verifies $f(s)\geq c s^{\overline{q}}$ for some $c,\overline{q}>0$.
\end{itemize}
We sum up here the results obtained. The least assumption on the bounded set $\Omega$ is to have a Lipschitz boundary.
\begin{itemize}
  \item $(H_1)$ gives uniqueness of large solutions (Corollary  \ref{unique}). It is moreover a sharp condition (Remark \ref{maxi}).
  \item $(H_2)$ is a sufficient condition for existence of solutions in case $\Omega$ satisfies a uniform interior ball condition or it is a convex body (Theorem \ref{exgenf}). Therefore, $(H_2)$ can be considered as the corresponding Keller-Osserman condition for $p=1$.
  \item If $\Omega$ is a $C^2$ domain, then $(H_3)$  is enough to prove that solutions to \eqref{p-laplace-f} (which exist for small $p$'s since $(H_3)$ implies \eqref{KOp}) converge to the solution to \eqref{maineq} as $p\to 1^+$.
\end{itemize}
}
 \section{Preliminaires and notations}\label{preliminaris}
 In this section we collect the main notation and some useful results we will use in our analysis.

\subsection{Functions of bounded variations and some generalizations}

Let $\Omega$ be  an open subset of $\R^N$. A
function $u \in L^1(\Omega)$ whose gradient $Du$ in the sense of
distributions is a vector valued Radon measure with finite total
mass in $\Omega$ is called a {\it function of bounded variation}.
The class of such functions will be denoted by $BV(\Omega)$ and $|Du|$ will denote the total variation of the measure $Du$.

A measurable set $E\subset\R^N$ is said to be of finite perimeter in $\Omega$ if $\chi_E\in BV(\Omega)$. In this case, the perimeter of $E$ in $\Omega$ is defined as $Per(E,\Omega) := {|D\chi_E|(\Omega)}$. We shall use the notation $Per(E) := Per(E,\R^N)$. For sets of finite
perimeter E one can define the essential boundary $\partial^*E$, which is a countably $(N - 1)$-rectifiable set with
finite $\mathcal H^{N-1}$ measure, where $\mathcal H^{N-1}$ is the $(N - 1)$-dimensional Hausdorff measure. Moreover, the outer unit normal $\nu^E(x)$ exists at $\mathcal H^{N-1}$ almost all points $x$ of $\partial^*E$. It holds that
the measure $|D\chi_E|$ coincides with the
restriction of $\mathcal H^{N-1}$ to $\partial^*E$.

For further information and properties concerning functions of bounded variation and sets of finite perimeter
we refer to \cite{Ambrosio}, \cite{EG} or \cite{Ziemer}.

\medskip

 We consider the following truncature functions.
For $k>0$, let $T_{k}(s) := \max(\min(-k,s),k)$.
Given any function $u$ and $a,b\in\R$ we shall use the notation
$[u\geq a] = \{x\in \R^N: u(x)\geq a\}$,  and similarly for the sets 
 $[u
> a]$, $[u \leq a]$, $[u < a]$, etc.,  while the symbol $u\res{E}$ will indicate the restriction of the function $u$ to the measurable set $E\subset\rn$\bk.

\medskip
Given a real function $f(s)$, we define  its  positive part as  $f^+ (s)=\max(0,f(s))$.
{For our purposes, }we  need to consider the function spaces $$TBV(\Omega):= \left\{
u \in L^1(\Omega)  \ :  \ \ T_k(u) \in BV(\Omega), \ \ \forall \ k>0 \right\},$$ {$$TBV_{loc}(\Omega):= \left\{
u \in L^1_{loc}(\Omega)  \ :  \ \ T_k(u) \in BV(\Omega), \ \ \forall \ k>0 \right\},$$ }
\noindent and to give a sense to the
Radon-Nikodym derivative (with respect to the Lebesgue measure)
$\nabla u$ of $Du$ for a function $u \in TBV_{loc}(\Omega)$.

\begin{lemma}\label{WRN}{\cite[Lemma 1]{ACM4:01}}
For every $u \in TBV_{loc}(\Omega)$ there exists a unique measurable
function $v : \Omega \rightarrow \R^N$ such that
\begin{equation}\label{E1WRN}
\nabla T_k(u) = v \1_{[| u | < k]} \ \ \ \ \ {\mathcal
L}^N-{\rm a.e.}, \ \ \forall \ k>0.
\end{equation}
\end{lemma}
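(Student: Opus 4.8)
The plan is to prove the existence and uniqueness of the measurable function $v$ characterizing the approximate gradient of a $TBV_{\loc}$ function, treating separately the uniqueness (easy) and existence (the real content).

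For \textbf{uniqueness}, suppose $v_1$ and $v_2$ both satisfy \eqref{E1WRN}. Then on the set $[|u|<k]$ we have $v_1 \1_{[|u|<k]} = \nabla T_k(u) = v_2\1_{[|u|<k]}$ $\mathcal{L}^N$-a.e., so $v_1 = v_2$ a.e. on $[|u|<k]$ for every $k$. Since $u \in L^1_{\loc}(\Omega)$ is finite a.e., we have $\bigcup_{k>0}[|u|<k] = \Omega$ up to a null set, and letting $k \to \infty$ through integer values gives $v_1 = v_2$ $\mathcal{L}^N$-a.e. on $\Omega$.

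For \textbf{existence}, the key structural point is a consistency (compatibility) relation between the approximate gradients of different truncations, which follows from the chain rule for $BV$ functions. Concretely, I would first establish that for $0 < h < k$ one has $T_h(T_k(u)) = T_h(u)$, and hence, using the a.e. differentiation relation $\nabla T_h(u) = \nabla T_k(u)\,\1_{[|u|<h]}$ (valid because $T_h$ is constant on $[|u|\ge h]$ so its Radon--Nikodym derivative vanishes there), the candidate field is forced: on the set $[|u|<k]$ it must equal $\nabla T_k(u)$, and the displayed relation shows these definitions agree on overlaps. I would therefore \emph{define} $v$ pointwise by setting $v(x) := \nabla T_k(u)(x)$ for $x \in [|u|<k]$, choosing for each $x$ any $k$ with $|u(x)|<k$ (for instance the least integer exceeding $|u(x)|$). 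Because $u$ is a.e. finite, this defines $v$ $\mathcal{L}^N$-a.e. on $\Omega$; measurability follows since $v$ agrees on each measurable piece $[k-1 \le |u| < k]$ with the measurable function $\nabla T_k(u)$, and a countable union of such pieces exhausts $\Omega$.

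It then remains to verify that this $v$ satisfies \eqref{E1WRN} for \emph{every} $k$, not merely for the integer values used in the construction. Fix an arbitrary $k>0$ and an integer $m > k$. On $[|u|<k] \subset [|u|<m]$ we have $v = \nabla T_m(u)$ by construction, and the compatibility relation $\nabla T_k(u) = \nabla T_m(u)\,\1_{[|u|<k]}$ gives $\nabla T_k(u) = v\,\1_{[|u|<k]}$ there; on $[|u|\ge k]$ both sides vanish since $T_k(u)$ is locally constant. This yields the required identity. The main obstacle, and the step deserving the most care, is justifying the compatibility relation $\nabla T_k(u) = \nabla T_h(u)$ a.e. on $[|u|<h]$ for $h<k$: this rests on the locality of the approximate gradient for $BV$ functions (two $BV$ functions agreeing on a set have equal approximate gradients a.e. there) applied to $T_h(u) = T_h(T_k(u))$, together with the fact that the absolutely continuous part of $D T_k(u)$ restricted to $[|u|\ge h]$ is null because $T_k(u)$ is constant there. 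Everything else is bookkeeping with truncations and null sets.
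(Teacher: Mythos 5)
Your proof is correct and follows essentially the same route as the source: this paper does not prove the lemma but quotes it from \cite[Lemma 1]{ACM4:01}, whose argument (an adaptation to $BV$ of Lemma 2.1 in \cite{B6}) is exactly your scheme — establish the compatibility relation $\nabla T_h(u) = \nabla T_k(u)\,\chi_{[|u|<h]}$ a.e. for $h<k$ via the locality of approximate gradients (equivalently, the $BV$ chain rule), then define $v$ by gluing $\nabla T_k(u)$ over the sets $[|u|<k]$ and check consistency, with uniqueness immediate since $u$ is finite a.e. No gaps.
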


Thanks to this result we define $\nabla u$ for a function $u \in TBV_{loc}(\Omega)$ as the
unique function $v$ which satisfies (\ref{E1WRN}). {Obviously, if $w\in W_{\rm loc}^{1,1}(\Omega)$, then the generalized gradient turns out to coincide with the classical distributional one}. This notation will be used throughout
in the sequel.

\medskip

 We denote by ${\mathcal P}$ the set of nondecreasing Lipschitz continuous functions ${S} : [0, +\infty[ \rightarrow \R$
satisfying $S^{\prime}(s) = 0$ for $|s|$ large enough.
We recall the following result.

\begin{lemma}\label{CR}{\cite[Lemma 2]{ACM4:01}}
If $u \in TBV(\Omega)$, then $S(u) \in BV(\Omega)$ for every $S \in {\mathcal P}$. Moreover, $\nabla
S(u) = S^{\prime}(u) \nabla u$ \ ${\mathcal L}^N$-{\rm a.e.}
\end{lemma}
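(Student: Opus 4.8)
The plan is to reduce the claim to the structure of $TBV(\Omega)$ that was just established, namely that each truncation $T_k(u)$ lies in $BV(\Omega)$ and that the generalized gradient $\nabla u$ of Lemma \ref{WRN} satisfies $\nabla T_k(u) = \nabla u\,\1_{[|u|<k]}$ almost everywhere. First I would fix $S\in\mathcal P$ and choose $k>0$ large enough that $S'(s)=0$ for $|s|\geq k$; such a $k$ exists by the defining property of $\mathcal P$. The key observation is then that $S(u)=S(T_k(u))$ pointwise: indeed, on $[|u|<k]$ the truncation acts as the identity, while on $[|u|\geq k]$ the function $S$ is constant (equal to $S(k)$ or $S(-k)$, whichever applies), so composing with $T_k$ changes nothing. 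This identity transfers the problem from $u$ to the genuinely $BV$ function $T_k(u)$.

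Next I would invoke the standard chain rule for compositions of Lipschitz functions with $BV$ functions. Since $S$ is nondecreasing and Lipschitz continuous and $T_k(u)\in BV(\Omega)$, the composition $S(T_k(u))$ belongs to $BV(\Omega)$, and hence $S(u)\in BV(\Omega)$ by the identity of the previous step. This establishes the first assertion. For the gradient formula I would use the almost-everywhere form of the chain rule: the Radon--Nikodym derivative (with respect to $\mathcal L^N$) of $D(S(T_k(u)))$ equals $S'(T_k(u))\,\nabla T_k(u)$ almost everywhere. Now I substitute the representation from Lemma \ref{WRN}, writing $\nabla T_k(u)=\nabla u\,\1_{[|u|<k]}$, to obtain $\nabla S(u)=S'(T_k(u))\,\nabla u\,\1_{[|u|<k]}$ $\mathcal L^N$-a.e.

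Finally I would simplify the right-hand side to the claimed form $S'(u)\nabla u$. On the set $[|u|<k]$ we have $T_k(u)=u$, so $S'(T_k(u))=S'(u)$ there, and the indicator is $1$; on the complementary set $[|u|\geq k]$ we have $S'(u)=0$ by the choice of $k$, so both $S'(u)\nabla u$ and $S'(T_k(u))\nabla u\,\1_{[|u|<k]}$ vanish, the latter because the indicator is zero. Hence the two expressions agree almost everywhere and $\nabla S(u)=S'(u)\nabla u$ $\mathcal L^N$-a.e. The main subtlety to handle carefully is the measure-theoretic chain rule for Lipschitz compositions with $BV$ functions at the level of the absolutely continuous part of the gradient, together with verifying that the pointwise identifications ($S'(T_k(u))=S'(u)$ on one set, vanishing on the other) hold $\mathcal L^N$-almost everywhere; these are routine but are exactly the points where the argument must be stated with care rather than asserted.
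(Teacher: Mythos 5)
Your proof is correct and follows exactly the standard argument: reduce to the truncation via the identity $S(u)=S(T_k(u))$ for $k$ beyond the support of $S'$, apply the Lipschitz--$BV$ chain rule to $S(T_k(u))$, and then use the representation $\nabla T_k(u)=\nabla u\,\1_{[|u|<k]}$ of Lemma \ref{WRN} together with the vanishing of $S'$ outside $[-k,k]$. Note that the paper itself offers no proof---it quotes the result from \cite[Lemma 2]{ACM4:01}---and your argument is essentially the one given in that reference; the only point worth recording is that the integrability of $S(T_k(u))$ needed to place it in $BV(\Omega)$ is automatic here because $\Omega$ is bounded (otherwise one would need a normalization such as $S(0)=0$).
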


\subsection{A generalized Green's formula}\label{GreenAnz}

We shall need several results from \cite{Anzellotti1} (see also \cite{ACMBook}). Let
$$ X(\Omega) = \left\{ \z \in L^{\infty}(\Omega; \R^N) \
:
 \ \div(\z)\in L^1(\Omega) \right\}
$$

 If $\z \in X(\Omega)$ and $w \in BV(\Omega) \cap
L^{\infty}(\Omega)$
we define the functional $(\z,Dw):
C^{\infty}_{0}(\Omega) \rightarrow \R$ by the formula
\begin{equation}\label{defmeasx1}
\langle (\z,Dw),\varphi\rangle := - \int_{\Omega} w \, \varphi \,
\div(\z) \, dx - \int_{\Omega} w \, \z \cdot \nabla \varphi \, dx.
\end{equation}
In \cite{Anzellotti1} it is proved that $(\z,Dw)$ is a Radon measure in $\Omega$  verifying
$$
 (\z,Dw)(\Omega) = \int_{\Omega} \z \cdot \nabla w \, dx \ \
\ \ \ \forall \ w \in W^{1,1}(\Omega) \cap L^{\infty}(\Omega).
$$

Moreover, for all $w\in BV(\Omega)\cap L^\infty(\Omega)$,  $(\z,Dw)$ is absolutely continuous with respect to the total variation of $ D w$ and it holds,
\begin{equation}\label{acota}
(\z,Dw)(B)=\int_\Omega \theta(\z,Dw)|Dw| \leq \|\z\|_{\infty} |Dw|(B),
\end{equation}
for any Borel set $B\subseteq\Omega$, where $\theta(\z,Dw)$ is the Radon-Nikodym derivative of $(\z,Dw)$ with respect to $|Dw|$.

We will use the following result which is derived again from the work in \cite{Anzellotti1}.

\begin{lemma}
  \label{anz} Let $w\in BV(\Omega)\cap L^\infty(\Omega)$ and $\z\in X(\Omega)$. Then,\begin{itemize}
    \item[(a)] Coarea formula \cite[Proposition 2.7.(ii)]{Anzellotti1}: $$(\z,Dw)(B)=\int_{-\infty}^\infty (\z,D\chi_{[w>\lambda]})(B)\,d \lambda\,,\quad {\rm for \ any \ Borel \ set \ } B\subset\Omega.$$
    \item[(b)] Let $\alpha$ be an increasing function. Then, a slight modification of \cite[Proposition 2.8]{Anzellotti1} shows that if $\alpha(w)\in BV(\Omega)\cap L^\infty(\Omega)$, then $$\theta(\z,D(\alpha(w)))(x)=\theta(\z,D w)(x)\,,\quad |Dw|-{\rm \ a.e. \ in \ } \Omega.$$
  \end{itemize}
\end{lemma}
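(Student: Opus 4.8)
The statement to prove is Lemma~\ref{anz}, which packages two results about the Anzellotti pairing measure $(\z, Dw)$: part (a) is a coarea-type formula expressing the pairing measure as an integral over level-set perimeters, and part (b) asserts that the Radon--Nikodym density $\theta(\z, Dw)$ is invariant under composing $w$ with an increasing function $\alpha$. Since the excerpt explicitly tells us that these are, respectively, Proposition~2.7.(ii) and (a modification of) Proposition~2.8 from \cite{Anzellotti1}, the proof is fundamentally a matter of citing and adapting the Anzellotti machinery rather than building it from scratch.

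\medskip
\noindent\textbf{Part (a).} The plan is to reduce to the defining formula \eqref{defmeasx1} together with the classical BV coarea formula. First I would recall that for $w \in BV(\Omega)$ the coarea formula gives $|Dw| = \int_{-\infty}^{\infty} |D\chi_{[w>\lambda]}| \, d\lambda$ as measures, and that $Dw = \int_{-\infty}^\infty D\chi_{[w>\lambda]}\, d\lambda$. Testing the functional $(\z, Dw)$ against $\varphi \in C^\infty_0(\Omega)$ via \eqref{defmeasx1} and inserting the layer-cake representation $w(x) = \int_0^\infty \chi_{[w>\lambda]}\, d\lambda - \int_{-\infty}^0 (1-\chi_{[w>\lambda]})\,d\lambda$, one rewrites both integrals in \eqref{defmeasx1} as $\lambda$-integrals of the corresponding expressions for $\chi_{[w>\lambda]}$. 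By Fubini (justified by the $L^\infty$ bound on $\z$ and the finiteness of $|Dw|(\Omega)$ coming from $w \in BV$) this exhibits $\langle(\z,Dw),\varphi\rangle = \int_{-\infty}^\infty \langle(\z, D\chi_{[w>\lambda]}),\varphi\rangle\, d\lambda$, and the identity of Radon measures on arbitrary Borel sets $B\subset\Omega$ follows by a standard monotone-class/density argument. This is exactly the content of \cite[Proposition 2.7.(ii)]{Anzellotti1}, so in practice I would simply invoke it.

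\medskip
\noindent\textbf{Part (b).} Here I would argue that since $\alpha$ is increasing, the super-level sets of $\alpha(w)$ coincide with those of $w$: for each value $t$ there is $\lambda$ with $[\alpha(w) > t] = [w > \lambda]$ (up to the at-most-countably-many jump values of $\alpha$, which are $|Dw|$-negligible and hence harmless). Consequently the level-set pairings $(\z, D\chi_{[\alpha(w)>t]})$ are, after the monotone change of variable $t = \alpha(\lambda)$, just a reparametrization of the $(\z, D\chi_{[w>\lambda]})$. Applying part (a) to both $w$ and $\alpha(w)$ then shows that $(\z, D(\alpha(w)))$ and $(\z, Dw)$ are built from the \emph{same} family of level-set pairing measures, which both also control $|D(\alpha(w))|$ and $|Dw|$ through the same geometric data. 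Since $\theta$ is the Radon--Nikodym density of the pairing against the total variation, and both pairing and total variation transform identically under $\alpha$, the density is unchanged $|Dw|$-a.e. This is the ``slight modification'' of \cite[Proposition 2.8]{Anzellotti1} referred to in the statement.

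\medskip
\noindent\textbf{Main obstacle.} I expect the delicate point to be part (b): the subtlety is not the formal super-level-set identity but the \emph{a.e.}\ bookkeeping. One must ensure that the set of levels $\lambda$ where $\alpha$ jumps, or where $\partial^*[w>\lambda]$ degenerates, does not corrupt the pointwise identification of densities, and that the density $\theta$ (defined via \eqref{acota} as the Radon--Nikodym derivative with respect to $|Dw|$) is genuinely insensitive to the reparametrization rather than merely the total measures agreeing. Concretely, I would want to verify that $\alpha$ increasing guarantees $\operatorname{sign}$-preservation of the level-set normals $\nu^{[w>\lambda]}$, so that $\theta(\z, D\chi_{[w>\lambda]})$ is common to both $w$ and $\alpha(w)$ at $|Dw|$-a.e.\ point; the increasing (as opposed to merely monotone or decreasing) hypothesis is exactly what keeps the outer normals aligned and prevents a sign flip in $\theta$. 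Once this alignment is secured, the conclusion follows by disintegrating both pairing measures along their level sets using part (a).
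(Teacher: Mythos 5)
Your part~(a) is fine and is in effect what the paper does: the paper offers no proof of Lemma~\ref{anz} at all, stating part~(a) as a direct citation of \cite[Proposition 2.7.(ii)]{Anzellotti1}, and your layer-cake/Fubini sketch through the defining formula \eqref{defmeasx1}, followed by a density argument to pass from test functions to Borel sets, is the standard way that result is established.

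Part~(b) is where you depart from the paper, and where there is a genuine gap. The paper obtains (b) by modifying Anzellotti's own proof of \cite[Proposition 2.8]{Anzellotti1}, whose content is that the density $\theta(\z,Du)(x)$ depends only on the direction $\frac{Du}{|Du|}(x)$ of the gradient measure, not on $u$ itself; you instead try to deduce (b) from (a). Your closing inference --- $(\z,D\alpha(w))$ and $(\z,Dw)$ are superpositions of the \emph{same} family of level-set pairings, and $|D\alpha(w)|$, $|Dw|$ of the same level-set perimeters, hence the Radon--Nikodym quotients agree --- is not valid as stated. Disintegrating two pairs of measures into the same fibers but with different weights ($d\alpha(\lambda)$ versus $d\lambda$) does not preserve the density of one superposition with respect to the other: if at a point $x$ the fiber densities $\theta(\z,D\chi_{[w>\lambda]})(x)$ could take different values for different $\lambda$ (say $+1$ on some levels and $-1$ on others), reweighting the levels would change the quotient. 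What makes the conclusion true is precisely that this cannot happen, i.e.\ that $\theta(\z,D\chi_{[w>\lambda]})(x)=\theta(\z,Dw)(x)$ for $|D\chi_{[w>\lambda]}|$-a.e.\ $x$, uniformly over the relevant levels, because all the level-set normals $\nu^{[w>\lambda]}$ coincide with the polar vector of $Dw$ and the pairing density depends only on that direction. But this $\lambda$-independence of the fiber densities \emph{is} Proposition 2.8; you correctly flag it as ``the main obstacle,'' yet it is the entire substance of the claim, so your derivation of (b) from (a) is circular unless you supply Anzellotti's argument for it --- which is exactly what the paper does by citation. Two smaller points the ``slight modification'' must also handle, and which your sketch glosses over: when $\alpha$ has jumps, the Stieltjes measure $d\alpha$ charges Lebesgue-null sets of levels, so fiber identities established only for a.e.\ $\lambda$ do not transfer to the $d\alpha$-superposition; and $\theta(\z,D(\alpha(w)))$ is a priori defined only $|D\alpha(w)|$-a.e., so asserting the identity $|Dw|$-a.e.\ requires an accompanying absolute-continuity remark.
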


In \cite{Anzellotti1}, a weak trace on $\partial \Omega$ of the
normal component of  $\z \in X(\Omega)$, {denoted by $[\z, \nu]$, is defined.
Moreover, the
following {\it Green's formula}, relating the function $[\z, \nu]$
and the measure $(\z, Dw)$, for  $\z \in X(\Omega)$ and  $w \in
BV(\Omega) \cap L^{\infty}(\Omega)$ is established
\begin{equation}\label{Green}
\int_{\Omega} w \ \div (\z) \ dx + (\z, Dw)(\Omega) =
\int_{\partial \Omega} [\z, \nu] w \ d\H^{N-1}.
\end{equation}}

\section{Definition and uniqueness of large solutions}\label{sec.unique}
Throughout this section,  $f$ is considered to be an increasing function and $\Omega$ is a bounded set in $\R^N$ with Lipschitz boundary. We give the following definition of distributional solution to the equation in  \rife{maineq} which is the natural extension to the classical one (see \cite{ACM-N, ACMBook, MP} and references therein).

\begin{definition}\label{def.dist}
  We say  that $u\in TBV(\Omega)$ is a distributional solution of
   $$
   \div\left(\frac{Du}{|Du|}\right)=f(u)
   $$
    if there exists $\z\in X(\Omega)$, with $\|\z\|_{\infty}\leq 1$, such that $f(u)={\rm div } \z$ in $\mathcal{D}'(\Omega)$ and $(\z,DT_k (u))=|DT_k (u)|$ as measures, for any $k>0$.
\end{definition}

\begin{remark}\label{reminside}
  Observe that if $u\in {T}BV(\Omega)$ is a distributional solution of \eqref{maineq} in the sense of Definition \ref{def.dist}, then $u\res_{\Omega'}$ is a distributional solution of \eqref{maineq} for all $\Omega'\subset\Omega$ with Lipschitz boundary. In fact it suffices to take $\z':=\z\res_{\Omega'}$ since, by the fact that $(\z,DT_k(u))=|DT_k(u)|$ as measures and \eqref{acota}, then $(\z,DT_k(u))(B)=|DT_k(u)|(B)$ for any Borel set $B\subset\Omega$; in particular for $B=\Omega'$. Then, again by \eqref{acota}, $(\z',DT_k(u)\res_{\Omega'})=|DT_k(u)\res_{\Omega'}|$.
\end{remark}

Here is our definition of \emph{large solution} for problem \rife{maineq}.

\begin{definition}\label{defisol}
   We say that $u\in TBV(\Omega)$ is a (large) solution {to} \eqref{maineq} if there exists $\z\in X(\Omega)$, with $\|\z\|_{\infty}\leq 1$, such that \begin{equation}\label{distr}f(u)={\rm div } \z\,,\quad {\rm in \ }\mathcal{D}'(\Omega)\,,\end{equation} \begin{equation}\label{eqmeasures}(\z,DT_k(u))=|DT_k(u)|\,,\quad {\rm  as \ measures \ for \ any \ } k>0 {\rm \
      and \ }\end{equation} \begin{equation}\label{boundarycond}[\z,\nu]=1\,,\quad  {\rm for \ a.e. \ }x\in \partial\Omega\end{equation}
\end{definition}

\begin{remark}\label{largemax}
Let us make some comments about Definition \ref{defisol}. First of all, we note that
   in case that $v\in BV(\Omega) \cap L^{\infty}(\Omega)\bk$, then, by \eqref{acota}, condition \eqref{eqmeasures} is equivalent to  \begin{equation}\label{eqpairing2}(\z,D v)= |Dv|\quad {\rm as \ measures.}\end{equation}Secondly, observe that a large solution is nothing but a distributional solution verifying the boundary condition \eqref{boundarycond}. Observe that, since $\|\z\|_\infty\leq 1$, then \eqref{boundarycond} forces the vector field $\z$ to be parallel to the outward unit exterior normal to the boundary and to have its biggest possible magnitude at the boundary. This is the mild way in which condition \lq\lq$u=+\infty$''   must be understood. Also observe that solutions to Dirichet problems involving the 1-Laplacian as the diffusion term do not verify, in general, the boundary condition in a classical trace sense (see e.g. \cite{ACM}, \cite{MST}). Usually,  if the Dirichlet constraint is \lq\lq$u=g$ at $\partial\Omega$", with $g\in L^1(\partial\Omega)$, then this condition transforms into $$[\z,\nu]\in {\rm sign \ } (T_k(g)-T_k(u))\,,\quad \mathcal H^{N-1} {\rm \ at \ } \partial\Omega\,$$for any $k>0$, where {\rm sign} is the multivalued sign function.

We finally note that, for the parabolic case without absorption studied in \cite{MP}, the condition at the boundary for a large solution is the same as \eqref{boundarycond}. Moreover, this condition produces solutions to be maximal as the following result shows.
\end{remark}

 \begin{theorem}
    \label{maximal} Let $f:\R\to \R$ be {an increasing function}. If $u$ is a large solution of \eqref{maineq} and $\overline u$ is a distributional solution of \eqref{maineq}, then $u\geq \overline u$ for a.e. $x\in \Omega$
  \end{theorem}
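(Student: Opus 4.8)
The plan is to establish maximality by a comparison argument: I subtract the equations solved by $u$ and $\overline u$ and test the difference against the positive part of the difference of their truncations, exploiting that each vector field is \emph{aligned} with its own solution, while the boundary condition \eqref{boundarycond} is available only for the large solution. Let $\z$ be the vector field associated with the large solution $u$ through Definition \ref{defisol}, and let $\overline\z$ be the one associated with the distributional solution $\overline u$ through Definition \ref{def.dist}. Fix $k>0$ and set
\[
w:=\big(T_k(\overline u)-T_k(u)\big)^+\in BV(\Omega)\cap L^\infty(\Omega).
\]
Applying Green's formula \eqref{Green} to the pairs $(\z,w)$ and $(\overline\z,w)$, subtracting, and using $\div\z=f(u)$, $\div\overline\z=f(\overline u)$ together with $[\z,\nu]=1$, I obtain
\[
\int_\Omega \big(f(u)-f(\overline u)\big)\,w\,dx+(\z-\overline\z,Dw)(\Omega)=\int_{\partial\Omega}\big(1-[\overline\z,\nu]\big)\,w\,d\H^{N-1}.
\]
The strategy is to show that the volume term is $\le 0$, the pairing term is $\le 0$, and the boundary term is $\ge 0$, so that all three must vanish.

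The two outer terms are easy. On $\{w>0\}$ one has $T_k(\overline u)>T_k(u)$, hence $\overline u>u$ and, $f$ being increasing, $f(u)\le f(\overline u)$; since $w\ge 0$ this makes the volume integrand nonpositive. For the boundary term, $\|\overline\z\|_\infty\le1$ yields $[\overline\z,\nu]\le1$ $\H^{N-1}$-a.e.\ on $\partial\Omega$, while the trace of $w$ equals the positive part of the difference of the traces of $T_k(\overline u)$ and $T_k(u)$, hence is nonnegative; this is precisely where the positive part, rather than the plain difference $T_k(\overline u)-T_k(u)$, is needed, since the traces of $u$ and $\overline u$ at $\partial\Omega$ are otherwise uncontrolled. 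The heart of the matter is the pairing term. Formally, on $\{T_k(\overline u)>T_k(u)\}$ one computes
\[
(\z-\overline\z)\cdot\big(\nabla T_k(\overline u)-\nabla T_k(u)\big)=\big(\z\cdot\nabla T_k(\overline u)-|\nabla T_k(\overline u)|\big)+\big(\overline\z\cdot\nabla T_k(u)-|\nabla T_k(u)|\big)\le 0,
\]
because $\z\cdot\nabla T_k(u)=|\nabla T_k(u)|$ and $\overline\z\cdot\nabla T_k(\overline u)=|\nabla T_k(\overline u)|$ by the alignment \eqref{eqmeasures}, while $\|\z\|_\infty,\|\overline\z\|_\infty\le1$ bound the remaining (cross) inner products by the corresponding moduli; since $\nabla w$ is supported on $\{T_k(\overline u)>T_k(u)\}$, this suggests $(\z-\overline\z,Dw)(\Omega)\le 0$.

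The main obstacle is to upgrade this pointwise computation to the measure inequality $(\z-\overline\z,Dw)(\Omega)\le0$ within Anzellotti's calculus, since $w$ is built out of two different functions and its gradient measure has diffuse, jump and Cantor parts. I would proceed by approximating $w$ with $\beta_\delta(T_k(\overline u)-T_k(u))$ for smooth nondecreasing $\beta_\delta$ vanishing on $(-\infty,0]$, rewriting the pairings through the coarea formula of Lemma \ref{anz}(a) and the invariance of the Radon--Nikodym density under increasing relabelling of Lemma \ref{anz}(b), and reducing everything to the densities $\theta(\z,DT_k(u))=\theta(\overline\z,DT_k(\overline u))=1$ granted by \eqref{eqmeasures} together with the bounds $\theta(\z,\cdot),\theta(\overline\z,\cdot)\le1$ from \eqref{acota}. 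Controlling the cross-pairings $(\z,DT_k(\overline u))$ and $(\overline\z,DT_k(u))$ on the singular parts of the measures and passing to the limit in $\delta$ is the delicate step that I expect to require the most care.

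Once all three terms are shown to vanish, the conclusion is immediate. The nonpositive volume integrand must be zero a.e., so $\big(f(u)-f(\overline u)\big)\,w=0$ a.e.; on $\{w>0\}$ we have $\overline u>u$, and the monotonicity of $f$ forces $f(u)-f(\overline u)<0$ there, whence $|\{w>0\}|=0$. Thus $T_k(\overline u)\le T_k(u)$ a.e.\ for every $k>0$, and letting $k\to\infty$ gives $\overline u\le u$ a.e.\ in $\Omega$, as claimed.
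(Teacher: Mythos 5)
Your proposal is correct and follows essentially the same route as the paper: the paper likewise tests $\div\z=f(u)$ and $\div\overline{\z}=f(\overline u)$ against $(T_k(\overline u)-T_k(u))^+$, uses Green's formula \eqref{Green} with $[\z,\nu]=1$ and $[\overline{\z},\nu]\le 1$, establishes the sign of the pairing term, and lets $k\to\infty$ using the strict monotonicity of $f$. The step you single out as delicate, namely $(\z-\overline{\z},D(T_k(\overline u)-T_k(u))^+)(\Omega)\le 0$, is exactly the one the paper dispatches in a single line by invoking \eqref{acota} together with $\|\z\|_\infty,\|\overline{\z}\|_\infty\le 1$, i.e.\ at essentially the same level of rigor as your pointwise computation.
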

  \begin{proof}
  By definition, there exist $z,\overline z\in X(\Omega)$ such that \begin{equation}
      \label{maximaleq1} f(u)={\rm div } z\,,
    \end{equation}
    \begin{equation}
      \label{maximaleq2} f(\overline u)={\rm div }\overline z.
    \end{equation} We multiply \eqref{maximaleq1} by $-(T_k(\overline u)-T_k(u))^+$, and integrate by parts in $\Omega$. We obtain $$-\int_{\Omega}(T_k(\overline u)-T_k(u))^+f(u)\stackrel{\eqref{boundarycond}}=\int_{\Omega}(z,D(T_k(\overline u)-T_k(u))^+)$$$$-\int_{\partial \Omega }(T_k(\overline u)-T_k(u))^+\, d\mathcal H^{N-1}\,.$$Similarly, $$\int_{\Omega}(T_k(\overline u)-T_k(u))^+f(\overline u)=-\int_{\Omega}(\overline z,D(T_k(\overline u)-T_k(u))^+)$$$$+\int_{\partial \Omega }[\overline z,\nu](T_k(\overline u)-T_k( u))^+\, d\mathcal H^{N-1}\,.$$
     Adding both equalities we get, $$\int_{\Omega} (T_k(\overline u)-T_k( u))^+(f(\overline u)-f(u))=-\int_{\Omega}(\overline z- z,D(T_k(\overline u)-T_k(u))^+)$$$$-\int_{\partial \Omega}(1-[z,\nu])(T_k(\overline u)-T_k( u))^+\,d\mathcal H^{N-1}.$$
  Finally, since in view of estimate \rife{acota},
  $$
  (\overline \z-\z, D(T_k(\overline u)-T_k(u)   )^+)(B)\geq 0,
  $$
 for any borel set $B\subseteq \Omega$ and using that $\|\z\|_\infty,\|\overline\z\|_\infty\leq 1$, we get $$\int_{\Omega} (T_k(\overline u)- T_k(u))^+(f(\overline u)-f(u))\,dx\leq 0.$$\bk Therefore, letting $k\to\infty$ and since $f$ is increasing, we obtain the desired result.
  \end{proof}
Note that, in particular, if we apply Theorem \ref{maximal} to two large solutions we get the following result:
\begin{corollary}
    \label{unique} If $f$ is an increasing function, there exists at most one large solution of \eqref{maineq}.
  \end{corollary}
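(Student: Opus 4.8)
The plan is to derive the corollary directly from Theorem~\ref{maximal} by a standard symmetry argument. The key observation is that Theorem~\ref{maximal} compares an arbitrary large solution against any distributional solution, and every large solution is in particular a distributional solution (it satisfies \eqref{distr} and \eqref{eqmeasures}, which are exactly the requirements of Definition~\ref{def.dist}; the extra condition \eqref{boundarycond} only strengthens it). This dual role is what makes the uniqueness proof essentially immediate.

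Concretely, suppose $u_1$ and $u_2$ are both large solutions of \eqref{maineq}. First I would apply Theorem~\ref{maximal} with $u=u_1$ (in its role as a large solution) and $\overline u=u_2$ (regarded merely as a distributional solution, which it is). This yields $u_1\geq u_2$ for a.e. $x\in\Omega$. Then I would apply the theorem again with the roles reversed, taking $u=u_2$ as the large solution and $\overline u=u_1$ as the distributional solution, obtaining $u_2\geq u_1$ for a.e. $x\in\Omega$. Combining the two inequalities gives $u_1=u_2$ a.e. in $\Omega$, which is the desired uniqueness.

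Since $f$ is assumed increasing throughout, the hypotheses of Theorem~\ref{maximal} are met in both applications, so no further work is needed. There is essentially no obstacle here: the entire content of the corollary has already been absorbed into the maximality statement of Theorem~\ref{maximal}, and the only point one must be careful about is the conceptual remark that a large solution qualifies as a distributional solution in the sense of Definition~\ref{def.dist}. This is transparent from comparing the two definitions, so the corollary follows by the symmetric double application described above.
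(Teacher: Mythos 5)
Your proof is correct and is exactly the paper's argument: the authors state the corollary as an immediate consequence of Theorem~\ref{maximal} applied to two large solutions, each viewed (as you note) as a distributional solution in the sense of Definition~\ref{def.dist}, with the roles swapped to get both inequalities. Nothing further is needed.
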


\section{Existence result with {a} general absorption term} \label{sec.p1}

In this section we address to  the analysis of \eqref{maineq} under different regularity conditions on the domain $\Omega$.

In order to do that, we first address to the case of $f(s)=s$; i.e: problem \rife{maineqv}.
 The existence of solutions to \eqref{maineqv} (and uniqueness in the case of $\Omega$ having a smooth boundary, see Section \ref{ibc}) will follow in a quite standard way from some recent tools available in the literature. On the other hand, if $\Omega$ is a convex body, then the solutions can be explicitly constructed even if the domain does not satisfy any further regularity condition (i.e. a uniform interior ball condition).

 \medskip

 This permits to obtain existence of solutions to \eqref{maineq} under very general conditions on the absorption term $f$. Our analysis shows that the sufficient condition on $f$ in \eqref{maineq} for obtaining a solution is that $f$ is  increasing in $\R$ \bk  and $f^{-1}$ is increasing and Lipschitz in the domain of the solution to \eqref{maineqv}.

\medskip

{We begin with the following definition and example of existence of large solutions for a very specific class of domains:

\begin{definition}
  We say that a bounded convex set $E$ of  class $C^{1,1}$ \bk is calibrable if there exists a vector field $\xi\in L^\infty(\R^N,\R^N)$ such that $\|\xi\|_\infty\leq 1$, $(\xi,D\chi_E)=|D\chi_E|$ as measures,  and $$-\div\xi=\lambda_E\chi_E \quad {\rm in \ } \mathcal D'(\R^N) $$ for some constant $\lambda_E$. In this case \cite[page 6]{acc}, $\lambda_E=\frac{Per(E)}{|E|}$ and integrating by parts in $E$ it is easily seen that $[\xi,\nu^E]=-1$, $\mathcal H^{N-1}-a.e$ in $\partial E$.
\end{definition}

{As it is proved in \cite[Theorem 9]{acc}, a bounded and convex set $E$ is calibrable if and only if the following condition holds: $$(N-1)\|{\bf H}_E\|_\infty\leq \lambda_E=\frac{ Per(E)}{|E|},$$ where ${\bf H}_E$ denotes the ($\mathcal H^{N-1}$-a.e. defined) mean curvature of $\partial E$. In particular, if $E=B_R(0)$, for some $R>0$, then $E$ is calibrable.}

\begin{example}\label{ex1}
  If $\Omega$ is a calibrable set, then $v=\frac{Per(\Omega)}{|\Omega|}$ is the large solution to \eqref{maineqv}. It suffices to take the restriction to $\Omega$ of the vector field in the definition of calibrability; i.e.: $\z:=-\xi\res_\Omega$, since
   $$\begin{array}{l}\dys  (\z,Dv)(\Omega)\stackrel{\eqref{Green}}=-\int_\Omega \left(\frac{Per(\Omega)}{|\Omega|}\right)^2\,dx\\\\\dys +\int_{\partial\Omega}-[\xi,\nu^\Omega]\frac{Per(\Omega)}{|\Omega|}\,d\mathcal H^{N-1}=0= |Dv|(\Omega).\end{array}$$
\end{example}}

We {next follow} with the proof  of the existence of a large solutions  to \rife{maineqv}  when the domain $\Omega$ {is smooth enough}.

\subsection{The case of the domain verifying a uniform interior ball condition}\label{ibc}

Let $\Omega$ satisfying a uniform interior ball condition: i.e. there exists $s_\Omega>0$ such that for every $x\in\Omega$ with $dist(x,\partial\Omega)<s_\Omega$, there is $z_x\in\partial\Omega$ such that $|x-z_x|=dist(x,\partial \Omega)$ and $B(x_0,s_\Omega)\subset\Omega$ with $x_0:=z_x+s_\Omega\frac{x-z_x}{|x-z_x|}$.  In the same way, one can define the  uniform exterior ball condition by replacing $\Omega$ with $\R^N\setminus\Omega$.  {As is proved in  \cite[Corollary 3.14]{AlMaz} a domain with compact boundary is of class $C^{1,1}$ if and only if it satisfies both a uniform interior ball condition and an exterior one. This result {is implicitly}  used in Section \ref{Secpto1}}.
From now on, $s_\Omega$ will denote the radius of the uniform interior ball condition corresponding to $\Omega$.
\medskip
In \cite{MP}  an operator $\mathcal A$ associated to the elliptic problem \begin{equation}
   \left\{\begin{array}
    {cc} \displaystyle -\div\left(\frac{Dv}{|Dv|}\right)=w\quad & {\rm in \ } \Omega \\ \\ v=+\infty & {\rm on \ }\partial\Omega\,,
  \end{array}\right.
\end{equation}
is defined. More precisely, we have the following: \bk
\begin{definition}\label{52}

    We say that $(v,w)\in \mathcal A$ iff $v,w\in L^1(\Omega)$, $v\in TBV(\Omega)$ and there exists $\z\in X(\Omega)$ with $\|\z\|_{\infty}\leq 1$, $w=-\div \z $ in $\mathcal D'(\Omega)$ such that $$[\z,\nu]=1\,,\qquad \mathcal H^{N-1}-{\rm a.e \ in \ } \partial\Omega$$ and
    \begin{equation}\label{eqpairing} (\z,D S(v))(\Omega)=|DS(v)|(\Omega)\,,\quad {\rm for \ all \ } S\in \mathcal P.\end{equation}
  \end{definition}

\noindent The following result holds true (\cite[Theorem 5.2.]{MP}):

\begin{theorem}\label{thA}
    The operator $\mathcal A$ is m-completely accretive in $L^1(\Omega)$ with dense domain.
  \end{theorem}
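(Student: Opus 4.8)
The plan is to establish the two defining properties of an m-completely accretive operator — complete accretivity and the range condition $R(I+\lambda\mathcal A)=L^1(\Omega)$ for every $\lambda>0$ — and then to check that $D(\mathcal A)$ is dense in $L^1(\Omega)$. Recall that $(v,w)\in\mathcal A$ means $w=-\div\z$ for some $\z\in X(\Omega)$ with $\|\z\|_\infty\le1$, the saturation identities \eqref{eqpairing}, and the boundary condition $[\z,\nu]=1$. For complete accretivity I would take two entries $(v_1,w_1),(v_2,w_2)\in\mathcal A$ with fields $\z_1,\z_2$ and a nondecreasing smooth $q:\R\to\R$ with bounded, compactly supported derivative and $q(0)=0$, and compute
\[
\int_\Omega q(v_1-v_2)(w_1-w_2)\,dx=-\int_\Omega q(v_1-v_2)\,\div(\z_1-\z_2)\,dx .
\]
Since $q(v_1-v_2)\in BV(\Omega)\cap L^\infty(\Omega)$ (the derivative of $q$ being compactly supported, $q(v_1-v_2)$ depends only on truncations of $v_1,v_2$), Green's formula \eqref{Green} turns this into
\[
\int_\Omega q(v_1-v_2)(w_1-w_2)\,dx=(\z_1-\z_2,Dq(v_1-v_2))(\Omega)-\int_{\partial\Omega}[\z_1-\z_2,\nu]\,q(v_1-v_2)\,d\mathcal H^{N-1}.
\]
The boundary integral vanishes because $[\z_1,\nu]=[\z_2,\nu]=1$, so complete accretivity reduces to the nonnegativity of the interior pairing.

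To show $(\z_1-\z_2,Dq(v_1-v_2))(\Omega)\ge0$ I would argue through the coarea formula of Lemma \ref{anz}(a) and the invariance of the Radon--Nikodym density $\theta$ under increasing functions of Lemma \ref{anz}(b). The identities \eqref{eqpairing} force $\theta(\z_i,Dv_i)=1$, $|Dv_i|$--a.e., so that each $\z_i$ is saturated along its own level sets, while the bound $\|\z_j\|_\infty\le1$ together with \eqref{acota} gives $\theta(\z_j,Dv_i)\le1$ for the cross terms. Splitting the pairing $(\z_1-\z_2,Dq(v_1-v_2))$ accordingly and using $q'\ge0$, the diagonal contributions equal the full variation of $q(v_1-v_2)$ along each field while the off-diagonal ones are dominated by them, yielding nonnegativity. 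This is the monotonicity lemma of Andreu--Caselles--Maz\'on type adapted to the present boundary condition.

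For the range condition, fix $\lambda>0$ and first take $h\in L^2(\Omega)$. I would produce a solution of $v-\lambda\div\z=h$ as the minimizer over $BV(\Omega)\cap L^2(\Omega)$ of the relaxed energy
\[
\Phi_\lambda(v)=\int_\Omega|Dv|-\int_{\partial\Omega}v\,d\mathcal H^{N-1}+\frac{1}{2\lambda}\int_\Omega(v-h)^2\,dx.
\]
The boundary term $-\int_{\partial\Omega}v\,d\mathcal H^{N-1}$ is the lower semicontinuous surrogate of the Dirichlet penalization $\int_{\partial\Omega}|v-g|\,d\mathcal H^{N-1}$ as $g\to+\infty$, encoding the relaxed constraint ``$v=+\infty$ on $\partial\Omega$''. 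The direct method yields a minimizer, unique by strict convexity; reading the Euler--Lagrange relation through the Anzellotti subdifferential characterization of the total variation produces the field $\z\in X(\Omega)$, $\|\z\|_\infty\le1$, satisfying \eqref{eqpairing} and precisely $[\z,\nu]=1$. Complete accretivity makes the resolvent $J_\lambda$ an $L^1$-contraction (in fact order-preserving and $L^\infty$-nonexpansive), so it extends from $L^2(\Omega)$ to all of $L^1(\Omega)$, giving $R(I+\lambda\mathcal A)=L^1(\Omega)$. Density of $D(\mathcal A)$ then follows from the standard fact that $J_\lambda h\to h$ in $L^1(\Omega)$ as $\lambda\to0^+$ for $h\in L^\infty(\Omega)$.

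I expect the main obstacle to be the rigorous handling of the boundary trace in the range condition. One must justify that $-\int_{\partial\Omega}v\,d\mathcal H^{N-1}$ is indeed the correct relaxation of the infinite Dirichlet datum on the Lipschitz boundary and, more delicately, that the minimizer's field attains the strong trace identity $[\z,\nu]=1$ $\mathcal H^{N-1}$-a.e.: the weak normal trace of fields in $X(\Omega)$ is defined only through the duality in \eqref{Green}, as an $L^\infty(\partial\Omega)$ function with $\|[\z,\nu]\|_\infty\le\|\z\|_\infty\le1$, so upgrading the variational inequality to the pointwise saturation $[\z,\nu]=1$ requires care. A parallel, more routine, technical point is verifying that $q(v_1-v_2)$ genuinely lies in $BV(\Omega)\cap L^\infty(\Omega)$ and that the pairing manipulations survive the passage to the $L^1$-closure.
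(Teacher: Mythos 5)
First, an important point of comparison: the paper does \emph{not} prove Theorem \ref{thA} at all; it is quoted as a black box from \cite[Theorem 5.2]{MP}, so your attempt has to be measured against the proof given there and against the techniques this paper does display (e.g.\ Theorem \ref{maximal}). Your decomposition into complete accretivity, range condition, and density is the right skeleton, and the accretivity step is sound in outline: it is the same Green's-formula computation \eqref{Green} used for Theorem \ref{maximal}, with the boundary integral cancelling because both fields have unit normal trace, and the interior pairing handled through \eqref{acota} and Lemma \ref{anz} (modulo the usual $TBV$ technicality, which you underestimate: $q(v_1-v_2)$ is \emph{not} a function of truncations of $v_1,v_2$, so one must work with $q(T_kv_1-T_kv_2)$ and pass to the limit in $k$).

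The genuine gaps are in the range condition and the density claim, i.e.\ precisely where the theorem is nontrivial. (i) The direct method for $\Phi_\lambda$ is not justified: the term $-\int_{\partial\Omega}v\,d\mathcal H^{N-1}$ is not lower semicontinuous under $L^1(\Omega)$ convergence (traces do not pass to $L^1$ limits), and lower semicontinuity of the sum can be obtained (by extending $v$ by a constant $M$ outside $\Omega$) only on sets bounded in $L^\infty$; minimizing sequences carry no such bound, and truncation does not restore it because truncating from above \emph{lowers} the trace and hence \emph{increases} the boundary term. Coercivity in $BV(\Omega)$ also fails: boundary layers of height $a_n$ and width $\delta_n$ with $a_n^2\delta_n\to 0$ have bounded energy but unbounded total variation. (ii) Even granting a minimizer, extracting $[\z,\nu]=1$ from the Euler--Lagrange inclusion is exactly the crux you flag and do not resolve: the identical subdifferential computation with any finite datum $g$ yields only the relaxed condition $[\z,\nu]\in{\rm sign}(g-v)$ (cf.\ \cite{ACM}), and nothing in your argument upgrades this to pointwise saturation. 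The proof in \cite{MP} sidesteps both issues by a different route: one solves the resolvent problems with \emph{finite} boundary data $n$ (existence and uniqueness from \cite{ACM}), shows by comparison that the solutions $v_n$ are monotone in $n$ and uniformly bounded, so that for $n$ large the condition ${\rm sign}(n-v_n)$ is the singleton $\{1\}$ and $[\z_n,\nu]=1$ holds \emph{exactly}; Anzellotti's theory \cite{Anzellotti1} (weak$^*$ convergence of the fields together with convergence of their divergences implies convergence of the normal traces) then carries this identity to the limit. (iii) Finally, your density argument is circular: for an $m$-accretive operator, $J_\lambda h\to h$ as $\lambda\to 0^+$ holds precisely for $h\in\overline{D(\mathcal A)}$, so it cannot be invoked to prove $\overline{D(\mathcal A)}=L^1(\Omega)$; one needs an independent estimate showing $J_\lambda h\to h$ for $h$ in some dense class.
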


\begin{theorem}\label{existenceball}
  Let $\Omega$ satisfy a uniform interior ball condition. Then, there exists a unique large solution $v\in BV(\Omega)\cap L^\infty(\Omega)$ to \eqref{maineqv}.
\end{theorem}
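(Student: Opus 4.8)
The plan is to realize the large solution $v$ as the resolvent of $0$ for the $m$-accretive operator $\mathcal{A}$ of Definition \ref{52}, and then to upgrade this abstract solution to an element of $BV(\Omega)\cap L^\infty(\Omega)$ by comparison with interior balls. The starting observation is that, up to the sign convention in Definition \ref{52}, finding a large solution of \eqref{maineqv} amounts to solving $0\in (I+\mathcal{A})v$. Indeed, if $(v,-v)\in\mathcal{A}$, then there is $\z\in X(\Omega)$ with $\|\z\|_\infty\le 1$, $-v=-\div\z$ (equivalently $\div\z=v$), $[\z,\nu]=1$ $\mathcal H^{N-1}$-a.e. on $\partial\Omega$, and $(\z,DS(v))(\Omega)=|DS(v)|(\Omega)$ for every $S\in\mathcal P$. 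Since $T_k\in\mathcal P$, applying the last identity with $S=T_k$ gives equality of total masses, and combined with the pointwise bound $(\z,DT_k(v))\le |DT_k(v)|$ coming from \eqref{acota} this forces $(\z,DT_k(v))=|DT_k(v)|$ as measures. Thus \eqref{distr}, \eqref{eqmeasures} and \eqref{boundarycond} all hold, i.e. $v$ is a large solution in the sense of Definition \ref{defisol}.

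Existence then follows at once: since $\mathcal{A}$ is $m$-accretive (Theorem \ref{thA}) we have $R(I+\mathcal{A})=L^1(\Omega)$ and the resolvent is single valued, so $v:=(I+\mathcal{A})^{-1}(0)$ produces a unique $v\in TBV(\Omega)\cap L^1(\Omega)$ with $(v,-v)\in\mathcal{A}$. It remains to show the a priori bound $0\le v\le N/s_\Omega$, which promotes $v$ to $BV(\Omega)\cap L^\infty(\Omega)$. For the upper bound, I would fix $x_0\in\Omega$ lying in a ball $B:=B(\bar x,s_\Omega)\subset\Omega$; the uniform interior ball condition guarantees that every point of $\Omega$ lies in such a ball (points at distance $\ge s_\Omega$ from $\partial\Omega$ trivially, the remaining ones by the very definition of $s_\Omega$). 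By Remark \ref{reminside}, $v\res_B$ is a distributional solution of \eqref{maineqv} in $B$, while by Example \ref{ex1} the constant $Per(B)/|B|=N/s_\Omega$ is the large solution in $B$; hence Theorem \ref{maximal} applied on $B$ yields $v\le N/s_\Omega$ a.e. on $B$, and therefore a.e. on $\Omega$. For the lower bound, the constant function $0$ is itself a distributional solution of \eqref{maineqv} on $\Omega$ (take $\z$ any fixed unit vector, so $\div\z=0=f(0)$ and $(\z,DT_k(0))=0=|DT_k(0)|$), so Theorem \ref{maximal} gives $v\ge 0$. Consequently $v\in L^\infty(\Omega)$, and since $v=T_k(v)\in BV(\Omega)$ for $k\ge\|v\|_\infty$, we conclude $v\in BV(\Omega)\cap L^\infty(\Omega)$.

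Uniqueness is then immediate from Corollary \ref{unique} applied to the increasing nonlinearity $f(s)=s$. I expect the main technical point to be the $L^\infty$ estimate: the operator $\mathcal{A}$ only delivers a function in $TBV(\Omega)\cap L^1(\Omega)$, and the passage to a bounded function genuinely uses the geometry of $\Omega$ through comparison with the calibrable interior balls. The delicate part is precisely the interplay between the localization of Remark \ref{reminside}, the explicit solution on balls of Example \ref{ex1}, and the maximality of Theorem \ref{maximal}; one should check carefully that the restriction $v\res_B$ still satisfies Definition \ref{def.dist} on the ball (so that the comparison is legitimate) and that the covering of $\Omega$ by interior balls of radius $s_\Omega$ is exhaustive.
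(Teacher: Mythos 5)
Your proposal is correct, and its core is the same as the paper's proof: both realize $v$ by solving $(v,-v)\in\mathcal A$, i.e. $v=(I+\mathcal A)^{-1}(0)$, via the $m$-accretivity of Theorem \ref{thA}, and in both cases uniqueness is just Corollary \ref{unique} applied to $f(s)=s$. Where you genuinely diverge is in upgrading the abstract resolvent solution to $BV(\Omega)\cap L^\infty(\Omega)$: the paper disposes of this in one line by citing \cite[Remark 5.3]{MP}, whereas you prove the bound $0\le v\le N/s_\Omega$ directly, covering $\Omega$ by interior balls of radius $s_\Omega$, restricting $v$ to each ball via Remark \ref{reminside}, identifying the constant $N/s_\Omega$ as the large solution on the ball via Example \ref{ex1} (balls being calibrable), and invoking the comparison of Theorem \ref{maximal} (with the trivial solution $0$ for the bound from below). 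This is a legitimate, self-contained alternative; it yields an explicit constant and makes visible exactly where the uniform interior ball condition enters quantitatively. It is also fully consistent with the paper's own methods, since the proof of Theorem \ref{exgenf} runs the very same comparison, only in the opposite direction (a large ball $B_R\supset\Omega$ giving the lower bound $v\ge N/R$). A further point in your favour is that you explicitly bridge the operator-theoretic notion of Definition \ref{52} (equality of \emph{total masses} for all $S\in\mathcal P$) with Definition \ref{defisol} (equality \emph{as measures} for every $T_k$), by noting that a measure dominated by $|DT_k(v)|$ through \eqref{acota} and having the same total mass must coincide with it; the paper leaves this identification implicit. The only technicality to record is that the functions in $\mathcal P$ are defined on $[0,+\infty[$, so Definition \ref{52} implicitly concerns nonnegative $v$; this is inherited from \cite{MP} and affects the paper's argument exactly as it affects yours, so it is not a gap specific to your proof.
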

\begin{proof}
By the definition of $m$-accretivity (we do not enter into the details, see for instance \cite{crandall},\cite{becr}) and as a consequence of Theorem  \ref{thA}, we get that for any $w\in L^1(\Omega)$, there exists a unique solution of \bk \begin{equation}
  \label{maineqv2} \left\{\begin{array}
    {cc} \displaystyle-\div\left(\frac{Dv}{|Dv|}\right)= w-v\quad & {\rm in \ } \Omega \\ \\ v=+\infty & {\rm on \ }\partial\Omega\,,
  \end{array}\right.
\end{equation}in the sense that $(v,w-v)\in\mathcal A$. We just take now $w=0$ and we get the result by observing that, by \cite[Remark 5.3]{MP}, $v\in BV(\Omega)\cap L^\infty(\Omega)$.
\end{proof}

\subsection{The case of a convex domain}

Let us consider $\Omega$ being a nontrivial convex body in $\R^N$; i.e. a compact convex subset of $\R^N$ with a nonempty interior.

We  recall the approach and several results given in \cite{ac} which we gather together in the next theorem:

\begin{theorem}[\cite{ac}, Proposition 2.4 and Remark 2.3]\label{acc}Consider the problem $$(P)_\lambda:=\min_{F\subseteq \Omega} Per(F)-\lambda|F|$$
  Then, there is a unique convex set $K\subseteq \Omega$ of class $C^{1,1}$ (the Cheeger set, {which is moreover calibrable}, see \cite{ac,ccn} for details) which is a solution of $(P)_{\lambda_K}$ with $\lambda_D:=\frac{Per(D)}{|D|}$  for any $D\subseteq \Omega$. For any $\lambda>\lambda_K$ there is a unique minimizer $\Omega_\lambda$ of $(P)_\lambda$, which is moreover convex, and the function $\lambda\to \Omega_\lambda$ is increasing and continuous and $\Omega_\lambda\to \Omega$ as $\lambda\to\infty$.
\end{theorem}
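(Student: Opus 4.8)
Since the statement is borrowed from \cite{ac}, the plan is to recover the variational structure behind it. For a fixed $\lambda$ I would first secure existence of a minimizer of $J_\lambda(F):=Per(F)-\lambda|F|$ over measurable $F\subseteq\Omega$ by the direct method: $J_\lambda$ is bounded below on such sets (indeed $J_\lambda(F)\geq-\lambda|\Omega|$), so along a minimizing sequence the perimeters stay bounded, $BV$-compactness gives $L^1$-convergence of a subsequence of the characteristic functions, and lower semicontinuity of the perimeter together with continuity of the volume yields a minimizer $\Omega_\lambda$. The Cheeger threshold is identified by setting $\lambda_K:=\inf\{Per(D)/|D|:D\subseteq\Omega,\ |D|>0\}$: writing $J_{\lambda_K}(F)=|F|\big(Per(F)/|F|-\lambda_K\big)\geq 0$ shows that at $\lambda=\lambda_K$ the value $0$ is attained exactly on the minimizers of the ratio $Per(\cdot)/|\cdot|$, i.e. on Cheeger sets, while for $\lambda<\lambda_K$ the only minimizer is the empty set.

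Two of the remaining properties are then essentially soft. For monotonicity I would combine minimality with submodularity of the perimeter. With $A:=\Omega_{\lambda_1}$, $B:=\Omega_{\lambda_2}$ and $\lambda_1<\lambda_2$, the inequalities $J_{\lambda_1}(A)\leq J_{\lambda_1}(A\cap B)$ and $J_{\lambda_2}(B)\leq J_{\lambda_2}(A\cup B)$, added and combined with $Per(A\cap B)+Per(A\cup B)\leq Per(A)+Per(B)$ and the exact additivity $|A\cap B|+|A\cup B|=|A|+|B|$, collapse to $(\lambda_2-\lambda_1)\,|A\setminus B|\leq 0$, forcing $A\subseteq B$ up to a null set. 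Testing minimality of $\Omega_\lambda$ against $\Omega$ itself gives $\lambda(|\Omega|-|\Omega_\lambda|)\leq Per(\Omega)-Per(\Omega_\lambda)\leq Per(\Omega)$, whence $|\Omega\setminus\Omega_\lambda|\leq Per(\Omega)/\lambda\to0$, i.e. $\Omega_\lambda\to\Omega$; continuity of $\lambda\mapsto\Omega_\lambda$ then follows from monotonicity, which admits only jump discontinuities, together with the fact that monotone limits of minimizers are again minimizers, so a jump would violate uniqueness.

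The genuinely hard content, and where I would lean entirely on \cite{ac,ccn}, is \emph{convexity}, the \emph{$C^{1,1}$ regularity} of $\partial K$, and above all \emph{uniqueness}. The naive ``replace $F$ by its convex hull'' device fails here, since passing to the convex hull may increase the perimeter (e.g. for dumbbell-like competitors), so convexity is not for free. The right approach is the fine geometric description: the free part of $\partial\Omega_\lambda$ (the portion not lying on $\partial\Omega$) is a constant-mean-curvature surface with curvature $\lambda/(N-1)$, and each $\Omega_\lambda$ can be characterized through an inner-parallel-set/Minkowski construction governed by $\lambda$. This description simultaneously delivers convexity and the $C^{1,1}$ bound, and the rigidity of convex sets whose free boundary has prescribed constant curvature is exactly what pins the minimizer down uniquely. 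I expect this uniqueness/rigidity step to be the main obstacle: the submodular lattice argument alone only produces a smallest and a largest minimizer, and closing the gap between them requires the quantitative geometry of \cite{ac}.
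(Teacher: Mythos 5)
Your proposal is correct as far as it goes, but note that the paper itself offers no proof of this statement at all: Theorem \ref{acc} is imported verbatim from \cite{ac} (Proposition 2.4 and Remark 2.3), so the ``paper's approach'' is pure citation. Against that baseline, your reconstruction of the soft parts is sound: the direct method for existence (perimeters bounded along a minimizing sequence, $BV$ compactness, lower semicontinuity); the identification $\lambda_K=\inf\{Per(D)/|D|\}$ with the rewriting $J_{\lambda_K}(F)=|F|\bigl(Per(F)/|F|-\lambda_K\bigr)\geq 0$; the submodularity argument $Per(A\cap B)+Per(A\cup B)\leq Per(A)+Per(B)$, which does collapse to $(\lambda_2-\lambda_1)|A\setminus B|\leq 0$ and gives nestedness of \emph{any} pair of minimizers; the comparison with $\Omega$ giving $|\Omega\setminus\Omega_\lambda|\leq Per(\Omega)/\lambda$; and continuity from monotonicity plus uniqueness plus stability of minimizers under $L^1$ limits (the last point deserves the one-line verification that $\liminf$ of the minimum values passes to the limit set, which is routine). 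You are also right to flag that convexity of the minimizers cannot be obtained by a convex-hull trick, and that convexity, the $C^{1,1}$ regularity, and above all uniqueness for each $\lambda>\lambda_K$ are the genuinely hard content --- these rely essentially on $\Omega$ being a convex body (uniqueness fails for general domains) and on the constant-mean-curvature description of the free boundary in \cite{ac,ccn}. Deferring exactly those points to the cited works is legitimate and, in substance, reproduces the division of labor the paper itself makes; the only caveat is that your write-up proves strictly less than the full statement, so it should be presented as a partial reconstruction around the cited core rather than as a self-contained proof.
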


Let $K$ be the Cheeger set contained in $\Omega$ defined in the previous result. 
For each $\lambda\in (0,+\infty)$ let $\Omega_\lambda$ be the minimizer of problem $(P)_\lambda$. We take $\Omega_\lambda=\emptyset$ for any $\lambda<\lambda_K$. Using the monotonicity of $\Omega_\lambda$ and the fact that $|\Omega\setminus\cup\{\Omega_\lambda : \lambda>0\}|=0$ we may define  the \emph{variational mean curvature} \bk as \begin{equation}\label{varmecu}H_\Omega(x):=\left\{\begin{array}{cc}-\inf\{\lambda : x\in \Omega_\lambda\} & {\rm if \ }x\in\Omega \\ 0 & {\rm if \ } x\in\R^N\setminus\Omega.\end{array}\right.\end{equation}

In \cite{barozzi} (see also \cite[Theorem 2.3]{goma}) it is established that,  if $\Omega$ is a set of finite perimeter, then $\|H_\Omega\|_{1}=Per(\Omega)$ and  $$\displaystyle \int_{\Omega_\lambda} H_\Omega(x)\,dx=-Per(\Omega_\lambda)\,.$$

Thanks to this result, in \cite[Lemma 7]{acc} the authors are able to construct  a vector field $\xi_\Omega\in X(\R^N)$ with $\|\xi_\Omega\|_{\infty  }\leq 1$ such that $\div\xi_\Omega=-H_\Omega$ in $L^1(\R^N)$, and$$ (\xi_\Omega,D\chi_{\Omega_\lambda})(\R^N)=Per(\Omega_\lambda)\,,\quad {\rm for \ any \ }\lambda>0.$$

We next follow the construction of the solution to the Cauchy problem for the Total Variation Flow build up in \cite[Theorem 17]{acc} and \cite[Remark 2.5]{ac} in order to obtain a maximal solution to \eqref{maineqv}.

\begin{theorem}\label{excur}
  Let $\Omega$ be a non-trivial convex body and let $H_\Omega$ be the variational mean curvature given by \eqref{varmecu}. Then, $v(x):=-H_\Omega(x)$ is a large solution to \eqref{maineqv}. Moreover, if $\|H_\Omega\|_{\infty}<+\infty$, then, $\Omega$ is of class $C^{1,1}$.
\end{theorem}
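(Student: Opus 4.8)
\textbf{Proof proposal for Theorem \ref{excur}.}

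The plan is to verify the three defining conditions of a large solution from Definition \ref{defisol} for the candidate $v(x) = -H_\Omega(x)$, using the vector field $\xi_\Omega$ constructed via \cite{acc} together with the structural properties of the level sets $\{\Omega_\lambda\}$ provided by Theorem \ref{acc}. The natural choice is $\z := \xi_\Omega \res_\Omega$. First I would record that $\|\z\|_\infty \leq 1$ is immediate from $\|\xi_\Omega\|_\infty \leq 1$, and that the distributional equation \eqref{distr} holds: since $\div \xi_\Omega = -H_\Omega = v$ in $L^1(\R^N)$, restricting to $\Omega$ gives $f(v) = v = \div \z$ (here $f = \mathrm{id}$, as this is problem \eqref{maineqv}) in $\mathcal D'(\Omega)$. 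One must also check $v \in TBV(\Omega)$; by the coarea structure of $H_\Omega$ and the identity $\int_{\Omega_\lambda} H_\Omega = -Per(\Omega_\lambda)$, the truncations $T_k(v)$ will have bounded variation.

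The heart of the argument is the pairing condition \eqref{eqmeasures}, namely $(\z, DT_k(v)) = |DT_k(v)|$ as measures for every $k>0$. By Remark \ref{largemax} it suffices (once one knows $T_k(v) \in BV \cap L^\infty$) to show $(\z, D T_k(v)) = |D T_k(v)|$. The strategy is to exploit the level-set decomposition of $v$: since $v = -H_\Omega$ and $\{v > \lambda\} = \{H_\Omega < -\lambda\}$ is, up to the monotone family, precisely the minimizer $\Omega_\mu$ for the appropriate $\mu$, the coarea formula Lemma \ref{anz}(a) reduces the pairing on $T_k(v)$ to an integral over $\lambda$ of the pairings $(\z, D\chi_{\Omega_\lambda})$. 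I would then invoke the key property of $\xi_\Omega$ from \cite{acc}, namely $(\xi_\Omega, D\chi_{\Omega_\lambda})(\R^N) = Per(\Omega_\lambda) = |D\chi_{\Omega_\lambda}|(\R^N)$, which combined with the upper bound \eqref{acota} forces the Radon-Nikodym density $\theta(\z, D\chi_{\Omega_\lambda})$ to equal $1$ for $|D\chi_{\Omega_\lambda}|$-a.e. point, i.e. the pairing saturates the inequality \eqref{acota} on each level set. Reassembling via the coarea formula then yields the measure equality for $T_k(v)$.

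For the boundary condition \eqref{boundarycond}, $[\z,\nu]=1$ a.e. on $\partial\Omega$, I would use that $\Omega_\lambda \to \Omega$ as $\lambda \to \infty$ (Theorem \ref{acc}) together with the saturation $(\xi_\Omega, D\chi_{\Omega_\lambda})(\R^N) = Per(\Omega_\lambda)$. Passing to the limit, the normal trace of $\xi_\Omega$ on $\partial\Omega$ must achieve its maximal value; more carefully, applying the Green formula \eqref{Green} to $\chi_{\Omega_\lambda}$ and letting $\lambda \to \infty$ identifies the weak trace $[\z,\nu]$ as $1$ on $\partial\Omega$. This is essentially the same computation as in Example \ref{ex1}, carried out in the limit rather than for a single calibrable set. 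Finally, the last assertion---that $\|H_\Omega\|_\infty < +\infty$ implies $\Omega$ is of class $C^{1,1}$---would follow from the regularity theory for the minimizers $\Omega_\lambda$: a uniform bound on the variational mean curvature gives a uniform interior and exterior ball condition for $\partial\Omega$, which by \cite[Corollary 3.14]{AlMaz} is equivalent to $C^{1,1}$ regularity.

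The main obstacle I anticipate is the pairing computation \eqref{eqmeasures}: transferring the \emph{global} (integrated over $\R^N$) saturation identity $(\xi_\Omega, D\chi_{\Omega_\lambda})(\R^N) = Per(\Omega_\lambda)$ into a \emph{pointwise} statement about the density $\theta$, and then correctly threading this through the coarea formula Lemma \ref{anz}(a)--(b) to recover the measure equality for the truncations $T_k(v)$ rather than merely for the individual characteristic functions. Care is needed because the pairing measure lives on $\Omega$ while the vector field and the sets $\Omega_\lambda$ are defined on all of $\R^N$, so the localization (and the behavior of $\partial\Omega_\lambda$ relative to $\partial\Omega$) must be handled carefully.
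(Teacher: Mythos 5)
Your proposal follows essentially the same route as the paper (the vector field $\xi_\Omega$ from \cite{acc}, coarea plus saturation for \eqref{eqmeasures}, a Green-formula argument for \eqref{boundarycond}, regularity of $\Omega$ from boundedness of $H_\Omega$), but it contains a genuine orientation error at the heart of the pairing computation. You assert that $\{v>\lambda\}=\{H_\Omega<-\lambda\}$ ``is precisely the minimizer $\Omega_\mu$ for the appropriate $\mu$''. It is not: since $v=-H_\Omega$, definition \eqref{varmecu} gives $[v\leq\lambda]=\Omega_\lambda$, so $\{v>\lambda\}=\Omega\setminus\Omega_\lambda$ is the \emph{complement} of a minimizer (the family $\Omega_\lambda$ increases to $\Omega$, while the superlevel sets of $v$ decrease). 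Consequently $D\chi_{[v>\lambda]}=-D\chi_{\Omega_\lambda}$ as measures on $\Omega$, and the coarea formula of Lemma \ref{anz}(a) decomposes $(\z,DT_k(v))$ into pairings with $\chi_{[v>\lambda]}$, not with $\chi_{\Omega_\lambda}$. So if, as you argue, $\theta(\z,D\chi_{\Omega_\lambda})=1$ for your choice $\z=\xi_\Omega$, then reassembling gives $(\z,DT_k(v))=-|DT_k(v)|$, the opposite of \eqref{eqmeasures}: the step fails. This is exactly why the paper takes $\z:=-\xi_\Omega$ and, in its computation, performs the normal flip explicitly ($[\z,\nu^{[v\geq\lambda]}]$ is replaced by $-[\z,\nu^{[v\leq\lambda]}]=-[\z,\nu^{\Omega_\lambda}]$, and \eqref{xinuaux} is applied to the latter). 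Note also that flipping the sign of $\z$ is not a free repair of your argument, because it then clashes with your verification of \eqref{distr}; the resolution is that the two recalled properties of $\xi_\Omega$ are mutually inconsistent as stated in the paper (a sign typo): by Gauss--Green, $(\xi_\Omega,D\chi_{\Omega_\lambda})(\R^N)=-\int_{\Omega_\lambda}\div\xi_\Omega\,dx$, so the saturation $(\xi_\Omega,D\chi_{\Omega_\lambda})(\R^N)=Per(\Omega_\lambda)$ together with $\int_{\Omega_\lambda}H_\Omega\,dx=-Per(\Omega_\lambda)$ is consistent only with $\div\xi_\Omega=+H_\Omega$ (in the sign convention \eqref{varmecu}), not $-H_\Omega$. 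With the corrected sign, $\z=-\xi_\Omega$ satisfies both $\div\z=-H_\Omega=v$ and the correctly oriented pairing condition; your write-up uses both inconsistent identities at once, and the erroneous identification of superlevel sets with minimizers is what masks the discrepancy.

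A second, lesser issue is the boundary condition. Your ``more careful'' argument---Green's formula \eqref{Green} applied to $\chi_{\Omega_\lambda}$ followed by $\lambda\to\infty$---does not work as stated: whenever $\Omega_\lambda$ is compactly contained in $\Omega$, the trace of $\chi_{\Omega_\lambda}$ on $\partial\Omega$ vanishes and \eqref{Green} yields no information whatsoever about $[\z,\nu]$; moreover the limit $\lambda\to\infty$ cannot be taken under the trace, since traces of $BV$ functions are not continuous under $L^1$ convergence. The paper avoids this by importing both trace identities \eqref{xinu} and \eqref{xinuaux} from the computations of \cite[Theorem 17]{acc}. If you want a self-contained argument in the spirit of Example \ref{ex1}, take $w\equiv 1$ in \eqref{Green}: then $\int_{\partial\Omega}[\z,\nu]\,d\H^{N-1}=\int_\Omega \div\z\,dx=\int_\Omega v\,dx=\|H_\Omega\|_{1}=Per(\Omega)=\H^{N-1}(\partial\Omega)$, and since $[\z,\nu]\leq\|\z\|_\infty\leq 1$ a.e., this forces $[\z,\nu]=1$ $\H^{N-1}$-a.e.\ on $\partial\Omega$. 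The remaining ingredients of your proposal---the $TBV$ membership of $v$, the use of \eqref{acota} to upgrade the global saturation to the pointwise statement $\theta=1$ (a legitimate alternative to the paper's use of normal traces on $\partial\Omega_\lambda$), and the final $C^{1,1}$ claim via interior/exterior ball conditions and \cite{AlMaz} in place of the paper's appeal to \cite[Proposition 2.7]{acc}---are consistent with the paper and acceptable at the same level of rigor.
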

\begin{proof}

First of all, we have that $v\in TBV(\Omega)$ since, for $\lambda\leq k$,$$Per([T_k(v)\geq \lambda],\Omega)=Per ([v\geq \lambda],\Omega)=Per(\Omega_\lambda),$$ which is finite. Then, by the coarea formula (\cite[Theorem 3.40]{Ambrosio}), $$|DT_k(v)|(\Omega)=\int_0^k Per([T_k(v)\geq \lambda],\Omega)\ d\lambda<+\infty.$$

  Let $\xi_\Omega$ be the vector field obtained above and {let us }take $\z:=-\xi_\Omega$. Obviously $[v\leq \lambda]=\Omega_\lambda\subseteq\Omega$ and  $\z\in X(\Omega)$. Moreover, the same computations as in \cite[Theorem 17]{acc} show that \begin{equation}
    \label{xinu}[\z,\nu^\Omega]=1\,,\quad \mathcal H^{N-1}{\rm-a.e \ on \ }\partial\Omega \,,{\rm and}
  \end{equation}
  \begin{equation}
    \label{xinuaux} [\z,\nu^{\Omega_\lambda}]=1\,,\quad \mathcal H^{N-1}{\rm-a.e \ on \ }\partial\Omega_\lambda.
  \end{equation}
Finally, by Lemma \ref{anz}[a],
  $$\begin{array}{l}\dys  (\z, DT_k(v))(\Omega)=\int_0^\infty (\z,D\chi_{[T_k(v)\geq \lambda]})(\Omega)\,d\lambda\\\\ \dys=-\int_0^k\int_{(\partial^*[v\geq \lambda])\cap\Omega}[\z,\nu^{[v\geq\lambda]}]d\mathcal H^{N-1}\,d\lambda\\\\ = \dys \int_0^k \int_{(\partial^*[v\geq \lambda])\cap\Omega}[\z,\nu^{[v\leq\lambda]}]d\mathcal H^{N-1}\,d\lambda\\\\ \dys =\int_0^k \int_{(\partial^*[v\geq \lambda])\cap\Omega}[\z,\nu^{\Omega_\lambda}]d\mathcal H^{N-1}\,d\lambda\\\\\stackrel{\eqref{xinuaux}}= \dys \int_0^k Per([v\geq\lambda],\Omega)d\lambda= |D T_k(v)|(\Omega)\,.\end{array}$$

  Together with \eqref{acota}, this proves that $v$ is a large solution to \eqref{maineqv}.
 By Corollary \ref{unique}, it is its unique solution. Finally, suppose by contradiction that $\|H_\Omega\|_\infty<C$. Then, $\Omega_\lambda=\Omega$ for all $\lambda\geq C$. Since $\Omega_\lambda$ is a solution to $(P_\lambda)$, we proceed as in \cite[Proposition 2.7]{acc} to show that the mean curvature of $\Omega$ is bounded which, together with its convexity, proves that $\Omega$ is $C^{1,1}$.
\end{proof}

\begin{remark}\label{infinito}
Let us  emphasize  the following  qualitative property of large solutions.
  Observe that, as a corollary of Theorem \ref{excur} in case $\Omega$ is a nontrivial convex body which is not $C^{1,1}$, then the large solution to \eqref{maineqv} is not bounded. Heuristically, this means that the large solution takes the values $+\infty$ at those points in the boundary of the convex body which do not have a finite mean curvature (e.g. \bk
   at the \lq\lq corners").
\end{remark}

\subsection{Existence of solutions: the general case}
We next show that the existence of solutions to \eqref{maineqv} permits us to show existence of a large solution of \eqref{maineq}  in the case of $\Omega$ being either a {domain satisfying a uniform interior ball condition} or a convex body.
\begin{theorem}\label{exgenf}
  Let $f$ be an everywhere defined increasing function such that $f^{-1}$ is  locally  Lipschitz continuous in $]0,+\infty[$ . Then, there exists a large solution of \eqref{maineq}.
\end{theorem}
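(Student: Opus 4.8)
The plan is to build the large solution of \eqref{maineq} directly from the large solution $v$ of \eqref{maineqv}, whose existence is guaranteed by Theorem \ref{existenceball} (when $\Omega$ satisfies a uniform interior ball condition) or Theorem \ref{excur} (when $\Omega$ is a convex body). The formal change of variables described in the introduction is $v=f(u)$, equivalently $u=f^{-1}(v)$. So the natural candidate is $u:=f^{-1}(v)$, and I would verify that this $u$ satisfies all three conditions of Definition \ref{defisol} using \emph{the same} vector field $\z$ that witnesses $v$ being a large solution of \eqref{maineqv}. The point of the hypothesis that $f^{-1}$ be increasing and locally Lipschitz on $]0,+\infty[$ is precisely to make this substitution legitimate at the level of $TBV$ functions and Anzellotti pairings.

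First I would fix $\z\in X(\Omega)$ with $\|\z\|_\infty\le 1$ witnessing that $v$ solves \eqref{maineqv}, so that $\mathrm{div}\,\z=v$ in $\mathcal D'(\Omega)$, $(\z,DT_k(v))=|DT_k(v)|$ for all $k$, and $[\z,\nu]=1$ a.e.\ on $\partial\Omega$. Since $v$ is nonnegative and (in the convex case, possibly unbounded but locally bounded in the interior) lies in $TBV(\Omega)$, and $f^{-1}$ is locally Lipschitz and increasing on $]0,+\infty[$, I would show $u=f^{-1}(v)\in TBV(\Omega)$: this is a chain-rule statement for truncations, established by approximating $f^{-1}$ by functions of the class $\mathcal P$ and invoking Lemma \ref{CR} together with Lemma \ref{WRN}. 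Then the distributional equation \eqref{distr} is immediate, because $f(u)=f(f^{-1}(v))=v=\mathrm{div}\,\z$. The boundary condition \eqref{boundarycond} is inherited verbatim from $\z$, since $u$ uses the same field.

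The technical heart is condition \eqref{eqmeasures}, namely $(\z,DT_k(u))=|DT_k(u)|$ as measures. Here I would invoke Lemma \ref{anz}(b): because $\alpha:=f^{-1}$ is increasing and $u=\alpha(v)$, the Radon–Nikodym densities satisfy $\theta(\z,D\alpha(v))=\theta(\z,Dv)$, $|Dv|$-a.e.\ in $\Omega$. Since $(\z,Dv)=|Dv|$ forces $\theta(\z,Dv)=1$ $|Dv|$-a.e., I conclude $\theta(\z,Du)=1$, i.e.\ $(\z,Du)=|Du|$ on the set where $Du$ lives. The monotone relation between level sets of $u$ and $v$ (the coarea formula of Lemma \ref{anz}(a)) then upgrades this to the truncations $T_k(u)$ for every $k$, using that $\{u\ge\lambda\}=\{v\ge f(\lambda)\}$ by monotonicity of $f$. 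Combined with \eqref{acota}, which gives the reverse inequality automatically, this yields \eqref{eqmeasures}.

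The main obstacle I anticipate is the \textbf{regularity/integrability bookkeeping} when $v$ is \emph{unbounded}, which happens in the convex non-$C^{1,1}$ case (Remark \ref{infinito}); there $v=-H_\Omega$ may blow up near corners, so one must check $u=f^{-1}(v)\in L^1(\Omega)$ and $T_k(u)\in BV(\Omega)$ despite $f^{-1}$ being only \emph{locally} Lipschitz on $]0,+\infty[$ and the behavior of $f^{-1}$ near $0$ and $+\infty$ being uncontrolled. I expect this to be handled by working with truncations throughout and applying the chain-rule lemmas on each bounded level, but verifying that the Anzellotti pairing $(\z,DT_k(u))$ is well-defined (i.e.\ $T_k(u)\in BV(\Omega)\cap L^\infty(\Omega)$ with $\z\in X(\Omega)$) and that one may pass to the measure-level identity \eqref{eqpairing2} via the equivalence noted in Remark \ref{largemax} is where the care lies. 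The homogeneity of the $1$-Laplacian — the reason the change of variables $v=f(u)$ works without any Keller–Osserman growth restriction on $f$ at infinity — is what ultimately makes the argument close, and once \eqref{eqmeasures} is secured the theorem follows.
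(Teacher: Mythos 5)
Your overall strategy coincides with the paper's: set $u:=f^{-1}(v)$ where $v$ is the large solution of \eqref{maineqv} given by Theorem \ref{existenceball} or Theorem \ref{excur}, keep the \emph{same} vector field $\z$, and verify Definition \ref{defisol} via the chain rule and Lemma \ref{anz}(b). But there is a genuine gap at exactly the point you flag as the main obstacle and then leave unresolved: the behavior of $f^{-1}$ near $0$. Working ``with truncations throughout'' does not fix it, because truncating $u$ at level $k$ only caps $v$ from above at $f(k)$; it does nothing to the \emph{small} values of $v$. On $]0,f(k)]$ the function $f^{-1}$ is in general neither Lipschitz nor bounded below (take $f(s)=e^s$, so $f^{-1}(t)=\log t\to-\infty$ as $t\to 0^+$): if $v$ were allowed to come arbitrarily close to $0$ on a set of positive measure, then $u=f^{-1}(v)$ need not be finite a.e.\ or in $L^1(\Omega)$, the identity $T_k(u)=f^{-1}(T_{f(k)}(v))$ fails, and the chain-rule/Lemma \ref{anz}(b) argument cannot be run. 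Nothing in your proposal rules this out; moreover, in the convex non-smooth case $v$ is unbounded, so your direct statement ``$(\z,Dv)=|Dv|$ forces $\theta(\z,Dv)=1$'' is not even meaningful, since the Anzellotti pairing is only defined for $BV(\Omega)\cap L^\infty(\Omega)$ functions, and the whole argument must be carried out on truncations.

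The missing idea is a strictly positive uniform lower bound on $v$, which the paper obtains by a comparison argument combining three ingredients you never use together: pick a ball $B_R\supset\Omega$; by Example \ref{ex1} the constant $v_R\equiv N/R$ is the large solution of \eqref{maineqv} in $B_R$ (calibrability of the ball); by Remark \ref{reminside} its restriction to $\Omega$ is a distributional solution of \eqref{maineqv} in $\Omega$; and then Theorem \ref{maximal} (maximality of large solutions, applicable since $f(s)=s$ is increasing) yields $v\geq N/R>0$ a.e.\ in $\Omega$. With this bound in hand, $T_{f(k)}(v)$ takes values in the compact interval $[N/R,f(k)]\subset\,]0,+\infty[$, where $f^{-1}$ \emph{is} Lipschitz; hence $T_k(u)=f^{-1}(T_{f(k)}(v))\in BV(\Omega)$, $u\in TBV(\Omega)$, the distributional equation reads $\div\z=v=f(u)$, and Lemma \ref{anz}(b) applied with $\alpha=f^{-1}$ at the level of these truncations gives $\theta(\z,DT_k(u))=\theta(\z,DT_{f(k)}(v))=1$, i.e.\ \eqref{eqmeasures} directly---no coarea ``upgrade'' via Lemma \ref{anz}(a) is needed. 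Once you insert this lower bound, the rest of your outline is essentially the paper's proof.
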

\begin{proof}
  Let $v\in TBV(\Omega)$ be a large solution to \eqref{maineqv} obtained in  Theorems \ref{existenceball} and \ref{excur} under different conditions on $\Omega$. Then, there exists $\z\in X(\Omega)$ such that
$${\rm div } \z=v\,,\quad {\rm in \ }\mathcal{D}'(\Omega)\,,$$ $$(\z,DT_k(v))=|DT_k(v)|\,,\quad {\rm  as \ measures \ for \ any \ }k>0 {\rm \
      and \ }$$ $$[\z,\nu]=1\,,\quad  {\rm for \ a.e. \ }x\in \partial\Omega\,.$$

    Now, we take a ball $B_R$, sufficiently large such that $\Omega\subset B_R$ and we let $v_R(x):=\frac{N}{R}$ to be the unique large solution to \eqref{maineqv} in $B_R$ as seen in Example \ref{ex1}. Then, by Remark \ref{reminside}, $v_R$ is a distributional solution to \eqref{maineqv} in $\Omega$. By Theorem \ref{maximal}, then $v(x)\geq v_R(x)=\frac{N}{R}$, a.e. $x\in \Omega$.

\medskip
   So that, if we take $u:=f^{-1}(v)$, by chain's rule in $BV$ (\cite[Theorem 4.4]{Ambrosio}), $u\in TBV(\Omega)$. First of all,$${\rm div } \z=f(u) \,,\quad {\rm in \ } \mathcal{D}'(\Omega).$$

   Secondly, since $T_k(u)=f^{-1}(T_{f(k)}(v))$, by Lemma \ref{anz}(b)\, $$(\z,DT_k(u))=\theta(\z,DT_k(u))|DT_k(u)|=\theta(\z,DT_{f(k)}(v))|DT_k(u)|\stackrel{\eqref{eqmeasures}}=|DT_k(u)|\,,$$ as  measures, which, coupled with $[\z,\nu]=1$ shows that $u$ is a large solution of \eqref{maineq}.
\end{proof}

\begin{remark}
  Observe that the previous theorem shows that there exist solutions to \eqref{maineq} with nonlinearities which do not verify condition (\ref{KOp}) for any $p\geq 1$. One can take for instance $f(s)=log(1+s)$.
\end{remark}

\begin{remark}\label{maxi}
  Let $C\in\R$ be a constant and $\Omega
  $ be $C^{1,1}$. We consider now the case of $f\equiv C$. A trivial integration by parts shows that for $\div \z=v$ and $[\z,\nu]=1$ to hold, then, necessarily $C=\frac{Per(\Omega)}{|\Omega|}$. Then, $\Omega$ must be a calibrable set and $f=\frac{Per(\Omega)}{|\Omega|}$. In this case, large solutions exist. In fact, $v=\tilde C$ for any constant $\tilde C$ is a large solution to \eqref{maineq} in the sense of Definition \ref{defisol}. However, since $f$ is not increasing, Theorem \ref{maximal} does not hold. In this sense, we can say that there is not a maximal solution to \eqref{maineq}, consistent with the case of the $p-$Laplacian for $p>1$. With the same analysis, it can be proved that if $f$ is constant in a nontrivial  interval around $\frac {Per(\Omega)}{|\Omega|}$, then, large solutions are not unique. Therefore, the {strict} monotonicity is also a necessary condition for obtaining uniqueness of large solutions.
\end{remark}

{\begin{remark}\label{existencedirichlet} { Let $g:\partial\Omega\to \R$.}
 With the additional hypothesis (with respect to those in Theorem \ref{exgenf}) on $f$  that $f\circ g\in L^1(\partial\Omega)$ and a similar proof of Theorems \ref{exgenf} and \ref{maximal}, one can easily obtain existence and uniqueness of solutions to problem
\begin{equation}\label{eqdirichlet}\left\{\begin{array}
  {cc} \Delta_{1}u =f(u) & {\rm in \ }\Omega \\\\ u=g & {\rm on \ }\partial\Omega\,,
\end{array}\right.\end{equation}with $g\in L^1(\partial\Omega)$ from the unique solutions (see \cite{ACM}) to problem $$\left\{\begin{array}
  {cc} \Delta_{1} v=v & {\rm in \ }\Omega \\\\ v=f(g) & {\rm on \ }\partial\Omega\,,
\end{array}\right.$${ in the sense that $u\in BV(\Omega)$ and there exists $\z\in X(\Omega)$ such that $$f(u)={\rm div } \z\,,\quad {\rm in \ }\mathcal{D}'(\Omega)\,,$$ $$(\z,Du)=|Du|\,,\quad {\rm  as \ measures \
      and \ }$$ $$[\z,\nu]\in {\rm sign} (T_k(g)-T_k(u))\,,\quad  \mathcal H^{N-1}{\rm  \ a.e. \ }x\in \partial\Omega,$$ for any $k>0$.}
To our knowledge, problem \eqref{eqdirichlet} has not been studied in the literature in this generality.

\end{remark}}

\section{Existence of solutions obtained as limit for $p\to 1^{+}$}
\label{Secpto1}

 Here we want to show a stability type approach to  the existence of a large solution for the $1$-Laplace equation with absorption. The result  has a proper independent interest as it highlights the direct connection with standard large solutions  associated to $p$-Laplace type problems with absorption.   To perform the analysis we have to restrict our assumptions on both the domain and the nonlinearity $f$. Let $\Omega$ be a bounded domain of class $C^{2}$.

Concerning $f$ we assume (H3), that is $f(0)=0$,  $f$ is {continuous and }increasing and  there exists  $\overline{q}>0$ and $c>0$ such that
\begin{equation}\label{fabiana}f(s)\geq c s^{\overline{q}}\end{equation}
 for any $s\in [0,\infty)$.

We want to  take the  limit when $p\to 1^{+}$ in problem
\begin{equation}
  \label{p-laplace-ff} \begin{cases}
   \Delta_p u_{p}  = f(u_{p})  & \text{in}\ \Omega,\\
 u_{p}=+\infty &\text{on}\ \partial\Omega,
  \end{cases}
  \end{equation}
 to obtain a large  solution of \begin{equation}
  \label{maineq3} \left\{\begin{array}
    {cc} \displaystyle \div\left(\frac{Du}{|Du|}\right)=f(u) \quad & {\rm in \ } \Omega \\ \\ u=+\infty & {\rm on \ }\partial\Omega\,.
  \end{array}\right.
\end{equation}

As $\overline{q}$ is fixed, without loss of generality we can always think about $p<1+\overline{q}$ in order for the Keller-Osserman condition to be satisfied for \emph{any} $p$ near $1$.

{
The following stability result will be proved along this section and it is the main result in it.

\begin{theorem}
 Let $f$ verify (H3). Then, there is a sequence of solutions to \eqref{p-laplace-ff}, $\{u_p\}_p\subset W^{1,p}_{loc}(\Omega)$ such that $u_p$ converges in $L^1_{loc}(\Omega)$, as $p\to 1^+$, to the unique solution to \eqref{maineq3}.
\end{theorem}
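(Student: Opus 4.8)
The plan is to construct the solution of \eqref{maineq3} as a limit of the $p$-Laplacian large solutions and then pin it down using the uniqueness already established in Corollary \ref{unique}. First I would record that (H3), through $F(s)=\int_0^s f\ge \tfrac{c}{\overline{q}+1}s^{\overline{q}+1}$, forces the Keller--Osserman condition \eqref{KOp} for every $p<1+\overline{q}$, so that a nonnegative large solution $u_p\in W^{1,p}_{loc}(\Omega)$ of \eqref{p-laplace-ff} exists for all $p$ near $1$. The first substantial task is to obtain estimates that are \emph{uniform} as $p\to 1^+$. On any $K\subset\subset\Omega$ a uniform bound $\sup_K u_p\le C_K$ follows by comparison with the radial $p$-large-solution on an interior ball of radius $\mathrm{dist}(K,\partial\Omega)$: since $f(s)\ge cs^{\overline{q}}$, the Keller--Osserman profile controlling this barrier depends only on $c,\overline{q}$ and the radius and stays bounded as $p\to 1^+$. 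A local Caccioppoli inequality, obtained by testing $\Delta_p u_p=f(u_p)$ against $\varphi^p u_p$ with $\varphi\in C_c^\infty(\Omega)$ and invoking the $L^\infty_{loc}$ bound, then yields $\int_K|Du_p|^p\le C_K$ uniformly in $p$.

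These bounds give precompactness of $\{u_p\}$ in $L^1_{loc}(\Omega)$, so I would extract a subsequence with $u_p\to u$ in $L^1_{loc}(\Omega)$ and a.e., with $T_k(u)\in BV_{loc}(\Omega)$, hence $u\in TBV_{loc}(\Omega)$. The crucial object is the limit of the fluxes $\z_p:=|Du_p|^{p-2}Du_p$. Since $(p-1)p'=p$, one has $\|\z_p\|_{L^{p'}(K)}^{p'}=\int_K|Du_p|^p\le C_K$, whence $\limsup_{p\to 1^+}\|\z_p\|_{L^{p'}(K)}\le 1$; a Hölder estimate against fixed exponents together with a diagonal argument produces a weak limit $\z$ with $\|\z\|_{L^\infty(\Omega)}\le 1$. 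Passing to the limit in $\mathrm{div}\,\z_p=f(u_p)$, using continuity of $f$ and $f(u_p)\to f(u)$ in $L^1_{loc}$, gives $f(u)=\mathrm{div}\,\z$ in $\mathcal{D}'(\Omega)$ and $\z\in X(\Omega)$.

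Next I would verify the ingredients of Definition \ref{defisol}. The pairing identity $(\z,DT_k(u))=|DT_k(u)|$ splits into the trivial inequality $(\z,DT_k(u))\le|DT_k(u)|$, which follows from $\|\z\|_\infty\le 1$ and \eqref{acota}, and the reverse one, which I would recover from the energy identity $\int\z_p\cdot DT_k(u_p)=\int_{[|u_p|<k]}|Du_p|^p$ combined with lower semicontinuity of the total variation under the $L^1_{loc}$ convergence; this already shows that $u$ is a distributional solution in the sense of Definition \ref{def.dist}. The remaining and, I expect, hardest step is the boundary condition $[\z,\nu]=1$ on $\partial\Omega$. Heuristically, since $u_p\to+\infty$ at $\partial\Omega$ the gradients $Du_p$ are asymptotically aligned with the outer normal, so $\z_p$ points outward with $|\z_p|=|Du_p|^{p-1}$, and in the limit the constraint $\|\z\|_\infty\le 1$ together with this outward alignment should force the normal trace to attain its maximal value $1$. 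Making this rigorous is the technical crux: I would exploit the $C^2$ regularity of $\Omega$ (so that $d(x)=\mathrm{dist}(x,\partial\Omega)$ is $C^2$ near the boundary and the uniform interior ball condition holds), build a lower barrier near $\partial\Omega$ ensuring enough blow-up of $u_p$, and then pass to the limit in a localized version of Green's formula \eqref{Green} to identify $[\z,\nu]$.

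Once $u$ is shown to be a large solution of \eqref{maineq3}, Corollary \ref{unique} guarantees that it is the unique one; in particular the limit is independent of the extracted subsequence, so by a standard subsequence argument the whole family $u_p$ converges to $u$ in $L^1_{loc}(\Omega)$ as $p\to 1^+$, which is precisely the assertion. I expect the interior estimates and the flux compactness to be routine adaptations of known $p\to 1^+$ techniques, while the transfer of the singular boundary behaviour ``$u=+\infty$'' into the weak condition $[\z,\nu]=1$ will require the most careful work and is where the $C^2$ hypothesis on $\Omega$ is genuinely used.
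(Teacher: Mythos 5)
Your interior analysis (local $L^\infty$ bound by comparison with radial barriers on interior balls, Caccioppoli estimate, flux compactness giving $\z$ with $\|\z\|_\infty\le 1$, passage to the limit in the distributional equation, and the pairing identity via lower semicontinuity plus the energy identity) is a correct sketch and runs parallel to the paper's Section \ref{Secpto1}, modulo one point you gloss over: the uniformity in $p$ of the barrier is genuinely delicate, since the Keller--Osserman profile $\Psi_p^{-1}$ blows up near $0$ while its argument in the comparison function degenerates like $1/p'$ as $p\to 1^+$; the paper (Lemma \ref{ballbound} and the subsequent explicit computation for $f(s)=s^q$) shows these two degenerations cancel to produce the finite limit \eqref{exactbound}. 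Asserting that the profile ``stays bounded as $p\to 1^+$'' is the right conclusion but is exactly what has to be computed.

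The genuine gap is the boundary condition \eqref{boundarycond}, which you yourself identify as the crux and then leave as a heuristic (``gradients asymptotically aligned with the outer normal, lower barriers, localized Green's formula''). No mechanism is given that would convert this heuristic into a proof, and the paper's actual mechanism is quite different and does not involve any alignment or lower-barrier argument. The paper's proof rests on three ingredients your sketch lacks. First, a \emph{global} $BV$ bound on truncations (Lemma \ref{globT}): this requires showing $\{u_p\le k\}\subset\subset\Omega$ via the boundary asymptotics of \cite[Theorem 4.4]{mat} -- this is the only place where the blow-up of $u_p$ at $\partial\Omega$ is used, and it is used precisely to guarantee that $T_k(u_p)$ has boundary trace equal to $k$ and that $T_k(u_p)-k$ is admissible as a test function. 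Second, the global $L^\infty$ bound on the limit $u$ coming from the interior-ball covering, so that for $k$ large one has $k-T_k(u)\ge k-\|u\|_\infty>0$ on $\partial\Omega$. Third, lower semicontinuity in $L^1(\Omega)$ of the relaxed functional
\begin{equation*}
\mathcal F_k(v)=\int_\Omega |Dv|+\int_{\partial\Omega}|k-v|\,d\mathcal H^{N-1},
\end{equation*}
applied to $T_k(u_p)\to T_k(u)$: since $T_k(u_p)$ has trace $k$, the boundary term vanishes along the approximation, and combining this with Young's inequality, the equation tested against $T_k(u_p)-k$, and Green's formula \eqref{Green} for the limit produces a closed chain of inequalities beginning and ending with $\mathcal F_k(T_k(u))$. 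Equality must then hold throughout, and the equality case delivers \emph{simultaneously} the pairing identity \eqref{eqmeasures} (via \eqref{acota}) and $[\z,\nu](k-T_k(u))=|k-T_k(u)|$ on $\partial\Omega$, which forces $[\z,\nu]=1$ a.e. precisely because $k-T_k(u)$ is strictly positive there. In other words, the boundary condition falls out of a global variational argument exploiting the boundedness of the limit, not out of any pointwise behaviour of $\nabla u_p$ near $\partial\Omega$. Without this (or an equivalent) device, your proposal does not prove the theorem. A further minor overreach: under (H3) solutions to \eqref{p-laplace-ff} need not be unique, so ``the whole family $u_p$ converges'' is not well posed for arbitrary selections; the paper only claims (and only needs) convergence of the specific solutions built by monotone approximation, with uniqueness of the \emph{limit} supplied by Corollary \ref{unique}.
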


\begin{remark}
  Observe that we work on very general conditions on the nonlinearity $f$ which do not guarantee uniqueness of solutions to \eqref{p-laplace-ff} (for instance we do not assume condition \rife{matero} below). Anyway, thanks to  Corollary \ref{unique} uniqueness is achieved in the limit as  $p$ goes to one.
\end{remark}

We first recall the notion of weak solution to \eqref{p-laplace-ff}.}
 \begin{definition}\label{defdl}
 A function $u_{p}\in W^{1,p}_{loc}(\Omega)\cap L^{\infty}_{loc}(\Omega)$ is a weak solution {to} Problem \rife{p-laplace-ff}  if $T_{k}(u_{p})=k$ on $\partial\Omega$, for any $k>0$, \bk and
\begin{equation}  \label{defdleq}
 \int_{\omega} |\nabla u_{p}|^{p-2}\nabla u_{p} \cdot \nabla v +\int_{\omega}  f(u_{p}) v =0
 \end{equation}
 for any $v\in W^{1,p}_{0}(\omega)$ and $\omega\subset\subset\Omega$.
 \end{definition}
  Existence {and uniqueness} of  nonnegative solutions  $u_{p}$ {to} this problem are {studied} in \cite{dl} (see also \cite{mat}). In particular, existence is obtained (see \cite[Theorem 3.3]{mat}) under the condition on $f$ to be an increasing and continuous function with $f(0)=0$ and satisfying \eqref{KOp}. Under more restrictions on $f$ (see \cite[Corollary 4.5]{mat})  it is shown that the solution  satisfies \begin{equation}\label{matero}\lim_{x\to\partial\Omega}\frac{u_p(x)}{\Psi_p^{-1}(dist(x,\partial\Omega))}=1\,\end{equation}
uniformly, where
$$
\Psi_p(t):=\int_t^{+\infty} \frac{1}{(p' F(s))^\frac{1}{p}}\,ds\,.
$$ {Here $p'=\frac{p}{p-1}$ is the conjugate exponent of $p$.} {In \cite{mat} it is proved that \eqref{matero} yields uniqueness}.
{Note that, if we proceed in a formal way from \eqref{matero} and we let $p\to 1^+$ we obtain that the limit solution when $p\to 1^+$ must be a bounded function.  However, this argument is purely formal. Moreover, we want to show that this is in fact the case for any nonlinearity $f$ verifying (H3) (which does not imply, in general, the condition on $f$ needed for \eqref{matero} to be true {(see condition (A3) in \cite{mat}).}

\medskip
In order to {perform} our stability argument, we will need some careful local a priori estimates on the solutions $u_{p}$ {to} \eqref{p-laplace-ff}. Since we need these estimates to be nondegenerate as $p$ approaches $1$, we have to explicit all the constants appearing in the calculations in order to control them. For the sake of simplicity, throughout this section $C$ will indicate any positive constant (that may change his value from line to line) that could depend on $N$, $\Omega$, but not on $p$; if needed we will also use symbols as, for instance,  $C_{N,|\Omega|}$ \bk in order to stress the dependence of the constant on $N$ and $|\Omega|$.

As we already mentioned, without loss of generality, we can suppose $p$ small enough. {Note that,} if $p<\frac32$, then we can apply a \emph{controlled Sobolev inequality} that reads as follows: let $v\in W^{1,p}_{0}(\Omega)$, then
\begin{equation}\label{sobo}
\into |\nabla v|^{p}\geq \left(\frac{2N-3}{3N-2}\right)^{2} \left(\into|v|^{p^{\ast}}\right)^{\frac{p}{p^{\ast}}},
\end{equation}
 for any $p\in(1,\frac32)$ {with $p^*$ being the Sobolev conjugate of $p$} (see for instance   \cite{ads}).\bk

\subsection{Basic a priori estimates} In order to obtain local estimates we need to construct suitable cut-off functions. To do this we will use a  technical lemma whose proof can be found in \cite[Lemma $1.1$]{leoni}.


\begin{lemma}\label{tec}
Let $f$ be an increasing function such that $f(0)=0$, and satisfying \rife{fabiana}, and let $K$ be a positive constant. Then there exists a smooth function $\varphi: [0,1]\mapsto[0,1]$, with $\varphi(0)=\varphi'(0)=0$, $\varphi(1)=1$, such that
$$
t^{p}\frac{\varphi'(\sigma)^{p}}{\varphi(\sigma)^{p-1}} \leq \frac{1}{K} t f(t)\varphi(\sigma) +1,
$$
for any $\sigma\in [0,1]$, $t\geq 0$.
\end{lemma}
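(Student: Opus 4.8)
The plan is to prove Lemma~\ref{tec} by an explicit construction of $\varphi$, since the inequality is local in $\sigma$ and only needs to hold pointwise. First I would observe that the claimed inequality
$$
t^{p}\frac{\varphi'(\sigma)^{p}}{\varphi(\sigma)^{p-1}} \leq \frac{1}{K}\,t\,f(t)\,\varphi(\sigma)+1
$$
is a Young-type inequality in the variable $t$. For fixed $\sigma$, the left-hand side grows like $t^p$ while the right-hand side contains the term $\frac{1}{K} t f(t)\varphi(\sigma)$, which by hypothesis~\eqref{fabiana} is at least $\frac{c}{K} t^{1+\overline q}\varphi(\sigma)$; since $p<2\le 1+\overline q$ can be arranged (and in any case $1+\overline q>1$), this superlinear term dominates $t^p$ for large $t$, while the additive $+1$ absorbs the bounded regime of small $t$. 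So the strategy is to split into the regime where $t$ is large (absorption term wins) and where $t$ is bounded (constant $1$ wins), and to design $\varphi$ so that the crossover is controlled uniformly in $\sigma$.

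The key construction step: I would look for $\varphi$ of the form $\varphi(\sigma)=\sigma^{m}$ for a suitable exponent $m$ (or a smoothed version near $\sigma=0$ to guarantee $\varphi'(0)=0$), because then $\frac{\varphi'(\sigma)^p}{\varphi(\sigma)^{p-1}}=m^{p}\sigma^{mp-p-(m-1)(p-1)}=m^p\sigma^{m-p}$, a clean power of $\sigma$. With this choice the inequality becomes, after writing $A:=t^p m^p \sigma^{m-p}$ and using $\varphi(\sigma)=\sigma^m$,
$$
t^{p} m^{p}\sigma^{m-p}\le \frac{1}{K} t f(t)\sigma^{m}+1.
$$
One then applies the Young inequality $ab\le \frac{1}{K'}a^{r}+C(K',r)b^{r'}$ to the product on the left, choosing the exponents so that the $t$-power matches $t f(t)$ (controlled below by $t^{1+\overline q}$) and the $\sigma$-power matches $\sigma^{m}$, while the leftover constant term is bounded independently of $\sigma\in[0,1]$. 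Matching these exponents fixes the admissible range of $m$; requiring $m>p$ keeps $\sigma^{m-p}$ bounded on $[0,1]$ and requiring $m$ large enough forces $\varphi(0)=\varphi'(0)=0$. Finally I would replace the pure power by a $C^\infty$ modification that agrees with it away from the endpoints, still satisfying $\varphi(1)=1$, to obtain the asserted smoothness.

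The hard part will be making the estimate genuinely \emph{uniform} as $p\to 1^{+}$, which is exactly why the paper insists on explicit constants: the Young exponents $r,r'$ and the constant $C(K',r)$ degenerate in principle as $p\to1$, and the factor $m^p$ together with the gap between $t^p$ and $t^{1+\overline q}$ must be handled so that the final bound does not blow up. Since the statement quotes \cite[Lemma~1.1]{leoni}, I expect the cleanest route is to defer to that reference for the precise choice of $\varphi$ and the verification that the constant can be taken independent of $p$ in the relevant range $p<1+\overline q$; the essential analytic content is the superlinearity furnished by \eqref{fabiana}, which guarantees that the absorption term $\frac1K tf(t)\varphi$ always eventually dominates the diffusion power $t^p$, so that such a $\varphi$ exists for every fixed $K>0$.
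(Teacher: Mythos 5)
Your ansatz is essentially the one the paper has in mind: the paper gives no proof of Lemma \ref{tec} at all (it defers entirely to \cite[Lemma 1.1]{leoni}), and the remark that follows the lemma describes it as ``nothing but a generalization of Young's inequality'' whose model case is exactly the power $\varphi(\sigma)=\sigma^{m}$ with $m=\frac{p(\overline q+1)}{\overline q+1-p}$ that your exponent matching produces. So the choice of $\varphi$, the use of \eqref{fabiana}, and the large-$t$/small-$t$ splitting are all aligned with the paper's intent.

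The genuine gap is in your final quantitative step, and it is not a removable technicality. Applying Young's inequality (equivalently, maximizing $t\mapsto m^{p}t^{p}\sigma^{m-p}-\frac{c}{K}t^{\overline q+1}\sigma^{m}$ over $t\geq 0$, which is the sharp form of your splitting) with $m=\frac{p(\overline q+1)}{\overline q+1-p}$ does make the leftover term independent of $\sigma$, but that leftover is the constant
\begin{equation*}
C_{K}=\frac{\overline q+1-p}{\overline q+1}\left(\frac{p}{\overline q+1}\right)^{\frac{p}{\overline q+1-p}}m^{\frac{p(\overline q+1)}{\overline q+1-p}}\left(\frac{K}{c}\right)^{\frac{p}{\overline q+1-p}},
\end{equation*}
which is \emph{not} $\leq 1$ once $K$ is large, and no other choice of $m$, smoothing of $\varphi$, or cleverer splitting can achieve $\leq 1$. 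Indeed, the statement with ``$+1$'' on the right is false for $K$ large: take $f(t)=ct^{\overline q}$; for any admissible $\varphi$, taking the supremum over $t\geq 0$ in the claimed inequality forces, at every $\sigma$ where $\varphi(\sigma)>0$,
\begin{equation*}
\varphi'(\sigma)\leq C(p,c,\overline q)\,K^{-\frac{1}{\overline q+1}}\,\varphi(\sigma)^{\,1-\left(\frac{1}{p}-\frac{1}{\overline q+1}\right)},
\end{equation*}
and integrating this differential inequality over the interval where $\varphi>0$ (its left endpoint is a zero of $\varphi$, and $\varphi(1)=1$) yields $\frac{p(\overline q+1)}{\overline q+1-p}\leq C(p,c,\overline q)\,K^{-\frac{1}{\overline q+1}}$, which is violated for $K$ large. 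So your plan to have ``the additive $1$ absorb the bounded regime'' cannot be carried out uniformly in $K$; what your construction actually proves is the lemma with ``$+C_{K}$'' in place of ``$+1$'', with $C_{K}$ bounded as $p\to 1^{+}$. That corrected form is all that the paper really uses in Theorem \ref{5.3} (the additive constant only enters multiplied by $|B_{R}|$ and ends up inside $C_{r,R}$), so nothing downstream is affected; but as a proof of the literal statement your last step fails, and deferring to \cite{leoni} cannot repair it, since no reference can prove a normalization under which the statement itself is false.
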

\begin{remark}
Observe that, a priori, $\varphi$ can depend on $p$. In fact, Lemma \ref{tec} is nothing but a generalization of Young's Inequality. In the model case in which $f(s)=c s^{\overline{q}}$, with $\overline{q}>p-1$, then $\varphi(\sigma)= \sigma^{\frac{p(\overline{q}+1)}{\overline{q}-p+1}}$.
\end{remark}

\medskip

For $0<r<R\leq 1$, we will consider cut-off functions $\xi$ on balls $B_{r}\subset B_{R}\subset\subset\Omega$, that is, smooth functions in $C^{1}_{0}(B_{R})$, such  that $0\leq \xi\leq 1 $ and $\xi\equiv 1$ on $B_{r}$.  Observe that, if we set $\eta:=\varphi (\xi)$, where $\varphi$ is given by Lemma \ref{tec}, then $\eta$ is a cut-off function for $B_{r}$ as well in $B_{R}$.

\medskip

\noindent{\it Local energy estimate.}
 Here we prove a local  estimate for $\nabla u_{p}$ in $(L^{p}_{loc}(\Omega))^{N}$.

\begin{theorem}\label{5.3}
Let $u_{p}$ be a weak solutions {to} \rife{p-laplace-ff}. Then
\begin{equation}\label{energy}
\int_{B_{r}}|\nabla u_{p}|^{p}\leq C_{r,R}\,.
\end{equation}

\end{theorem}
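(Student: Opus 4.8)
The plan is to derive the local energy estimate \eqref{energy} by testing the weak formulation \eqref{defdleq} against a compactly supported function built from the powers of the solution and a suitable cut-off. Concretely, I would fix balls $B_r\subset B_R\subset\subset\Omega$ and a cut-off $\xi\in C^1_0(B_R)$ with $0\leq\xi\leq1$ and $\xi\equiv1$ on $B_r$, and set $\eta:=\varphi(\xi)$ where $\varphi$ is the function furnished by Lemma \ref{tec}. The natural test function is $v=u_p\,\eta^p$ (possibly after truncating $u_p$ at level $k$ and letting $k\to\infty$ at the end, since $u_p$ is only in $W^{1,p}_{loc}\cap L^\infty_{loc}$, to guarantee $v\in W^{1,p}_0(\omega)$ for some $\omega\subset\subset\Omega$). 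Computing $\nabla v=\eta^p\nabla u_p+p\,u_p\,\eta^{p-1}\nabla\eta$ and substituting into \eqref{defdleq} yields
\begin{equation*}
\int |\nabla u_p|^p\,\eta^p+\int f(u_p)\,u_p\,\eta^p=-p\int u_p\,\eta^{p-1}|\nabla u_p|^{p-2}\nabla u_p\cdot\nabla\eta.
\end{equation*}
The left-hand side contains the good term $\int|\nabla u_p|^p\eta^p$ and the nonnegative absorption term $\int f(u_p)u_p\eta^p$ (nonnegative since $f(0)=0$ and $f$ is increasing, so $f(s)s\geq0$).

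The heart of the matter is to absorb the right-hand side. I would first apply Young's inequality in the form $|\nabla u_p|^{p-1}|\nabla\eta|\,u_p\,\eta^{p-1}\leq \frac1{p'}\delta^{p'}|\nabla u_p|^p\eta^p+\frac1p\delta^{-p}u_p^p|\nabla\eta|^p$ (choosing $\delta$ so that the $|\nabla u_p|^p\eta^p$ contribution is a small fraction, say one half, of the good term and can be absorbed on the left). This reduces the task to bounding a term of the shape $\int u_p^p|\nabla\eta|^p$. Here is exactly where Lemma \ref{tec} enters: since $\eta=\varphi(\xi)$ we have $|\nabla\eta|=\varphi'(\xi)|\nabla\xi|$, so $u_p^p|\nabla\eta|^p=u_p^p\,\varphi'(\xi)^p|\nabla\xi|^p$, and by the inequality of Lemma \ref{tec} applied with $t=u_p$ and $\sigma=\xi$,
\begin{equation*}
u_p^p\,\frac{\varphi'(\xi)^p}{\varphi(\xi)^{p-1}}\leq \frac1K\,u_p\,f(u_p)\,\varphi(\xi)+1,
\end{equation*}
so that $u_p^p|\nabla\eta|^p=\big(u_p^p\varphi'(\xi)^p/\varphi(\xi)^{p-1}\big)\varphi(\xi)^{p-1}|\nabla\xi|^p\leq\|\nabla\xi\|_\infty^p\big(\tfrac1K u_pf(u_p)\eta^p+\eta^{p-1}\big)$, using $\varphi(\xi)^{p-1}=\eta^{p-1}\leq1$ and $\varphi(\xi)=\eta^{p}\cdot\eta^{1-p}$ bounded appropriately. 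Choosing $K$ large (independent of $p$) makes the $\tfrac1K u_pf(u_p)\eta^p$ contribution absorbable by the genuine absorption term $\int f(u_p)u_p\eta^p$ on the left, while the remaining $\int\eta^{p-1}$ is controlled by $|B_R|\leq C$.

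Collecting everything and absorbing the two controlled pieces into the left-hand side, I obtain $\frac12\int|\nabla u_p|^p\eta^p\leq C(|\Omega|,\|\nabla\xi\|_\infty)$, and since $\eta\equiv1$ on $B_r$ this gives $\int_{B_r}|\nabla u_p|^p\leq C_{r,R}$, which is \eqref{energy}. The main obstacle, and the reason the technical Lemma \ref{tec} was isolated beforehand, is the $p$-uniformity: a naive Young's inequality would produce constants like $\frac1{p'}$ or $\delta^{-p}$ that degenerate as $p\to1^+$, and the crude bound $u_p^p\leq C(1+u_p^{p^*})$ would not be controllable without invoking the absorption. Lemma \ref{tec} is precisely designed to trade the dangerous gradient-of-cutoff term against the absorption term $u_pf(u_p)$ with a constant $\frac1K$ that we are free to choose independently of $p$, so I must keep careful track that every constant introduced (the Young parameter $\delta$, the absorption constant $K$, and $\|\nabla\xi\|_\infty^p\leq\max(1,\|\nabla\xi\|_\infty)$) stays bounded uniformly for $p$ near $1$; this bookkeeping, rather than any single inequality, is the delicate part.
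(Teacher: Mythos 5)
Your proof is correct and follows essentially the same route as the paper's: test with the solution times a cut-off built from the function $\varphi$ of Lemma \ref{tec}, absorb the gradient contribution by Young's inequality, and use Lemma \ref{tec} to trade the dangerous term $u_p^p\,\varphi'(\xi)^p/\varphi(\xi)^{p-1}$ against the absorption term $u_pf(u_p)\varphi(\xi)$ plus a constant, with all constants tracked uniformly as $p\to 1^+$. The only cosmetic difference is that you test with $u_p\eta^p$ whereas the paper tests with $u_p\eta$ and instead inserts the factors $\varphi(\xi)^{(p-1)/p}$ directly into the Young splitting; the two bookkeepings are equivalent.
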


\begin{proof}
Let $\xi$ be a cut-off function for $B_{r}$  in $B_{R}$ such that $$
|\nabla \xi|\leq \frac{C}{R-r},
$$ $\varphi$ be as in Lemma \ref{tec}, and consider the cut-off function $\eta:=\varphi(\xi)$. We can take $v=\up \eta$ as test in {\rife{defdleq}} and we obtain
$$
\intr |\nabla \up|^{p}\eta +\intr f(\up)\up\eta = \intr |\nabla \up|^{p-2}\nabla \up\nabla\xi \varphi'(\xi) \up.
$$
Now, also using Young's inequality, we have
$$
\begin{array}{l}
\dys\left|\intr |\nabla \up|^{p-2}\nabla \up\nabla\xi \varphi'(\xi) \up\right| \\\\ \dys \leq  \intr |\nabla \up|^{p-1}|\nabla\xi| \varphi'(\xi) \up
\dys \leq \frac{C}{R-r}\intr  |\nabla \up|^{p-1}\varphi(\xi)^{\frac{p-1}{p}} \frac{\varphi'(\xi)}{\varphi(\xi)^{\frac{p-1}{p}} } \up \\\\ \dys \leq  \frac{C\vare}{p'(R-r)}\intr  |\nabla \up|^{p}\eta  + \frac{C}{\vare^{p-1}p(R-r)}\intr \frac{\varphi'(\xi)^{p}}{\varphi(\xi)^{{p-1}} } \up^{p} \\\\ \dys  \leq  \frac{1}{2}\intr  |\nabla \up|^{p}\eta  \dys + \frac{C}{(R-r)}\intr \frac{\varphi'(\xi)^{p}}{\varphi(\xi)^{{p-1}} } \up^{p}\,,
\end{array}
$$
where, in the last inequality, we also choose $\vare=\frac{(R-r)}{2 C} $.

Therefore, we have
\begin{equation}\label{stim1}
\frac12 \intr |\nabla \up|^{p}\eta +\intr f(\up)\up\eta \leq  \frac{C}{(R-r)}\intr \frac{\varphi'(\xi)^{p}}{\varphi(\xi)^{{p-1}} } \up^{p}\,.
\end{equation}
Now, we apply Lemma \ref{tec} with $t=u_{p}$ and $K= \frac{R-r}{C}$, in order to obtain
$$
\begin{array}{l}
\dys \frac{C}{(R-r)}\intr \frac{\varphi'(\xi)^{p}}{\varphi(\xi)^{{p-1}} } \up^{p}\leq \intr f(u_{p})u_{p}\eta +\frac{C}{R-r}|B_{R}|,
\end{array}
$$
which, together with \rife{stim1}, yields the desired result since $\eta\equiv 1$ on $B_{r}$.

\end{proof}

We will also need the following \emph{global} $BV$ bound on the truncations of $u_{p}$.

\begin{lemma}\label{globT}
  Let $u_{p}$ be  a  weak solution of \rife{p-laplace-ff}. Then
\begin{equation}\label{T-energy}
\int_{\Omega}|\nabla T_k(u_{p})|\,dx \leq  C(kf(k))^\frac{1}{p}\,.
\end{equation}
\end{lemma}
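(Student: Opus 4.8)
The plan is to insert a single truncation of $u_p$ as test function in the weak formulation (\ref{defdleq}), obtaining first a global bound on $\nabla T_k(u_p)$ in $L^p(\Omega)$, and then to descend to the $L^1$ bound (\ref{T-energy}) by Hölder's inequality. The whole point, in view of the subsequent limit $p\to 1^+$, is to carry out every step so that the final constant depends only on $\Omega$ and \emph{not} on $p$.

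First I would test (\ref{defdleq}) with $v:=T_k(u_p)-k$. Because $u_p$ is a large solution, the boundary prescription $T_k(u_p)=k$ on $\partial\Omega$ forces $u_p>k$ in a neighbourhood of $\partial\Omega$, so that $v$ is supported in the set $[u_p<k]$, which is compactly contained in $\Omega$; choosing any $\omega$ with $[u_p\le k]\subset\subset\omega\subset\subset\Omega$ we then have $v\in W^{1,p}_0(\omega)$, and since the integrands vanish outside $[u_p<k]$ the integrals over $\omega$ coincide with those over $\Omega$. Justifying this compact containment $[u_p<k]\subset\subset\Omega$ --- equivalently, the admissibility of the global test function --- is the one delicate point, as it genuinely uses the large-solution character of $u_p$ rather than its being a mere local weak solution; I would handle it through the interpretation of the boundary condition (which places $k-T_k(u_p)$ in $W^{1,p}_0(\Omega)$), or by a cut-off argument in which the extra gradient terms, being supported where $u_p<k$, drop out.

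With the test function in hand the computation is short. Since $\nabla v=\nabla T_k(u_p)=\nabla u_p\,\chi_{[u_p<k]}$, the diffusion term reduces to the clean energy $\int_\Omega |\nabla u_p|^{p-2}\nabla u_p\cdot\nabla T_k(u_p)=\int_\Omega|\nabla T_k(u_p)|^p$, and the equation (\ref{defdleq}) becomes
\[
\int_\Omega|\nabla T_k(u_p)|^p=\int_\Omega f(u_p)\,(k-T_k(u_p)).
\]
On $[u_p\ge k]$ the right-hand integrand vanishes, while on $[u_p<k]$ one has $0\le k-u_p\le k$ and, by monotonicity, $0\le f(u_p)\le f(k)$; hence the right-hand side is bounded by $k f(k)\,|[u_p<k]|\le kf(k)\,|\Omega|$. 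Finally Hölder's inequality with exponents $p$ and $p'$ gives
\[
\int_\Omega|\nabla T_k(u_p)|\le\Big(\int_\Omega|\nabla T_k(u_p)|^p\Big)^{\frac1p}|\Omega|^{\frac1{p'}}\le\big(kf(k)\,|\Omega|\big)^{\frac1p}|\Omega|^{1-\frac1p}=|\Omega|\,(kf(k))^{\frac1p},
\]
which is exactly (\ref{T-energy}) with $C=|\Omega|$. I would stress that the powers of $|\Omega|$ recombine as $|\Omega|^{1/p+1/p'}=|\Omega|$, so $C$ is independent of $p$, which is precisely what makes this estimate usable in the passage to the limit $p\to1^+$.
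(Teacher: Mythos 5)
Your computational core coincides with the paper's own proof: test \eqref{defdleq} with $v=T_k(u_p)-k$, observe that the resulting right-hand side is supported on $[u_p<k]$, where it is bounded by $kf(k)$, and finish with H\"older's inequality; your remark that the constant $C=|\Omega|$ recombines to be independent of $p$ is correct and is indeed the point of the lemma. The gap is in the one step you yourself flag as delicate: the admissibility of the test function, i.e.\ the compact containment $[u_p\le k]\subset\subset\Omega$. Your justification --- that the boundary prescription $T_k(u_p)=k$ on $\partial\Omega$, read as $k-T_k(u_p)\in W^{1,p}_0(\Omega)$, \emph{forces} $u_p>k$ in a neighbourhood of $\partial\Omega$ --- is not valid. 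A Sobolev trace condition carries no pointwise information near the boundary: a nonnegative function can satisfy $k'-T_{k'}(u)\in W^{1,p}_0(\Omega)$ for every $k'>0$ and still dip below $k$ on a sequence of small interior ``wells'' accumulating at $\partial\Omega$ (their $W^{1,p}$-cost is of order $k'^p r_j^{N-p}$, hence summable since $p<\frac32\le N$), in which case $[u_p\le k]$ is \emph{not} compactly contained in $\Omega$. Your fallback, a cut-off argument in which ``the extra gradient terms drop out'', is circular for the same reason: the extra term $\int_\Omega|\nabla u_p|^{p-2}\nabla u_p\cdot\nabla\eta_n\,(T_k(u_p)-k)$ is supported on $[u_p<k]\cap\operatorname{supp}\nabla\eta_n$, and making it vanish as $\eta_n\uparrow 1$ requires either that this set be eventually empty (exactly the compact containment you are trying to avoid proving) or global integrability of $|\nabla T_k(u_p)|^{p-1}$ up to $\partial\Omega$, which is essentially the conclusion of the lemma.

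The paper closes this gap with a quantitative ingredient you do not have: the boundary asymptotics of Matero \cite[Theorem 4.4]{mat}, namely $\Psi_p(u_p(x))/\mathrm{dist}(x,\partial\Omega)\to 1$ as $x\to\partial\Omega$, valid for the weak solutions under consideration. If $\Omega_{p,k}=[u_p\le k]$ had a limit point $x\in\partial\Omega$, then along a sequence $x_n\to x$ with $u_p(x_n)\le k$ one would have $\Psi_p(u_p(x_n))\ge\Psi_p(k)>0$ while $\mathrm{dist}(x_n,\partial\Omega)\to 0$, so the quotient blows up, contradicting the limit $1$. This pointwise blow-up rate is an analytic property of the solutions themselves, not a consequence of the weak formulation or of any trace-type reading of the boundary condition; it is the missing ingredient, and once it is in place your computation goes through verbatim.
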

\begin{proof}
  Fix $k\in [0,+\infty[$ and let $\Omega_{p,k}:=\{x\in\Omega : u_p(x)\leq k\}$.  First we prove that $\Omega_{p,k}\subset\subset\Omega$. In fact, suppose by contradiction that this is not the case. So that, the exist $x\in \partial\Omega\cap\overline{\Omega_{p,k}}$ and a sequence $\{x_{n}\}\subset\Omega_{p,k}$, with $x_{n}$ that converges to $x$. Now, since
  $u_{p}(x_{n})\leq k$ we deduce that
  $$
  \Psi_{p}(u_{p}(x_{n}))\geq \Psi_{p}(k),
  $$
  that is
  $$
    \frac{\Psi_{p}(u_{p}(x_{n}))}{{ dist}(x_{n},\partial \Omega)}\geq \frac{\Psi_{p}(k)}{{dist}(x_{n},\partial \Omega)},
  $$
   where the right hand side of the previous inequality diverges as $x_{n}$ approaches $x$. This is a contradiction since
  $$
\lim_{x_{n}\to x}  \frac{\Psi_{p}(u_{p}(x_{n}))}{{ dist}(x_{n},\partial \Omega)}=1,
  $$
  as proved in \cite[Theorem 4.4]{mat}.

  \medskip

  Now we are allowed to take $T_k(u_p)-k$ as test function in \eqref{p-laplace-ff}. Therefore,
  $$
  \begin{array}{l}\dys \int_\Omega |\nabla T_k(u_p)|^p\,dx=\int_{\Omega_{p,k}}|\nabla T_k(u_p)|^p\,dx \\ \\ \dys=\int_{\Omega_{p,k}}f(u_p)(k-T_k(u_p))\,dx\leq f(k)k|\Omega|. \end{array}$$ Finally, by H\"older's inequality,  $$\int_{\Omega}|\nabla T_k(u_{p})|\,dx\leq\left(\int_\Omega |\nabla T_k(u_p)|^p\,dx\right)^\frac{1}{p}|\Omega|^\frac{1}{p'}\leq C (kf(k))^\frac{1}{p}\bk\,.$$
\end{proof}

\medskip

\noindent{\it Local boundedness of the solutions.}
 First, we prove an upper bound for solutions to \eqref{p-laplace-ff} in case the domain is a ball:

\begin{lemma}\label{ballbound}
 {Let $\Omega=B_R$. Then, there is  a weak solution to \eqref{p-laplace-ff}, $u_p\in W^{1,p}_{loc}(B_R)\cap L^\infty_{loc}(B_R)$  such that  \begin{equation}\label{ballboundeq}u_p\leq \Psi_p^{-1}\left(\frac{1}{p'N}\left(R-\left(\frac{|x|^p}{R}\right)^\frac{1}{p-1}\right)\right).\end{equation}}
\end{lemma}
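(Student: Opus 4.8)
The plan is to construct the solution explicitly on the ball by exploiting radial symmetry, reducing the PDE to an ODE and integrating it directly. Since $\Omega=B_R$ is radially symmetric and $f$ is increasing, I would look for a radial solution $u_p(x)=u_p(|x|)$, writing $\rho=|x|$. For a radial function, the $p$-Laplacian takes the familiar form
$$
\Delta_p u_p = \rho^{1-N}\bigl(\rho^{N-1}|u_p'|^{p-2}u_p'\bigr)',
$$
so the equation $\Delta_p u_p = f(u_p)$ becomes the ODE $\bigl(\rho^{N-1}|u_p'|^{p-2}u_p'\bigr)' = \rho^{N-1} f(u_p)$ on $(0,R)$, with the blow-up condition $u_p(\rho)\to+\infty$ as $\rho\to R^-$ and regularity (hence $u_p'(0)=0$) at the center. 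The existence of such a radial solution in $W^{1,p}_{\loc}(B_R)\cap L^\infty_{\loc}(B_R)$ follows from the general existence theory in \cite{dl,mat} quoted after Definition \ref{defdl}, since (H3) implies the Keller-Osserman condition \eqref{KOp} for $p<1+\overline q$.

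The heart of the argument is the pointwise bound \eqref{ballboundeq}, and the natural way to get it is to produce an explicit \emph{supersolution} of the form appearing on the right-hand side and invoke comparison. First I would integrate the ODE once: since $u_p$ is increasing in $\rho$ (as $f(u_p)\ge 0$ forces $\rho^{N-1}|u_p'|^{p-2}u_p'$ to be nondecreasing, and it vanishes at $\rho=0$), we have $u_p'\ge 0$ and
$$
\rho^{N-1}(u_p')^{p-1} = \int_0^\rho s^{N-1} f(u_p(s))\,ds.
$$
The idea is to bound $f(u_p(s))$ from above using the monotonicity $f(u_p(s))\le f(u_p(\rho))$ on $[0,\rho]$, which yields $\rho^{N-1}(u_p')^{p-1}\le \frac{\rho^N}{N}f(u_p)$, i.e. $(u_p')^{p-1}\le \frac{\rho}{N}f(u_p)$. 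Rearranging and recalling that $F(s)=\int_0^s f$ and $F'=f$, one gets a differential inequality that after a standard manipulation (multiplying by $u_p'$ and comparing with the energy $F$) produces
$$
\frac{u_p'}{(p'F(u_p))^{1/p}} \le \Bigl(\tfrac{\rho^p}{R}\Bigr)^{\!\frac{1}{p(p-1)}}\!\!\cdot(\text{const}),
$$
which I would then integrate from $\rho=|x|$ up to $R$, using $\Psi_p(t)=\int_t^\infty (p'F(s))^{-1/p}\,ds$ and $\Psi_p(u_p(\rho))\ge \Psi_p(\text{RHS})$, to arrive at exactly \eqref{ballboundeq}. The precise bookkeeping of the exponents is what makes the quoted right-hand side $\Psi_p^{-1}\bigl(\tfrac{1}{p'N}(R-(|x|^p/R)^{1/(p-1)})\bigr)$ come out correctly.

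The main obstacle I anticipate is not the existence of a radial solution but rather handling the precise constants in the integration so that the bound is \emph{nondegenerate as $p\to1^+$}: the factors $p'$, $(p-1)$, and the exponent $\frac{1}{p-1}$ all blow up or degenerate in the limit, and the whole point of the later stability argument is to keep these under control. Thus I would carry out the ODE estimate keeping every constant explicit, exactly as the paper insists in the preamble to this subsection, rather than absorbing them into a generic $C$. A secondary technical point is justifying the comparison/integration rigorously given that $u_p$ need only be $W^{1,p}_{\loc}$ (so the ODE manipulations must be read in the weak sense, or one must first establish enough regularity of the radial profile away from the boundary via standard elliptic regularity). Once \eqref{ballboundeq} is established for the explicit radial solution, the lemma is proved by taking that particular $u_p$ as the asserted weak solution.
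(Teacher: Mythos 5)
There is a genuine gap, and it is directional. Your ODE manipulation (drop the curvature term, use $f(u_p(s))\leq f(u_p(\rho))$, integrate) produces an \emph{upper} bound on $u_p'$, namely $(u_p')^{p-1}\leq \frac{\rho}{N}f(u_p)$. But upper bounds on $u_p'/(p'F(u_p))^{1/p}$ translate into \emph{lower} bounds on $u_p$, not upper ones: since $\Psi_p$ is decreasing and the large solution blows up at $\rho=R$, one has
\begin{equation*}
\Psi_p(u_p(\rho))=\int_\rho^R \frac{u_p'(r)}{(p'F(u_p(r)))^{1/p}}\,dr,
\end{equation*}
so an inequality $u_p'/(p'F(u_p))^{1/p}\leq g(\rho)$ gives $\Psi_p(u_p(\rho))\leq \int_\rho^R g$, i.e. $u_p(\rho)\geq \Psi_p^{-1}\bigl(\int_\rho^R g\bigr)$ --- the opposite of \eqref{ballboundeq}. (Indeed, the crude bound $u_p'\leq (p'F(u_p))^{1/p}$ yields exactly the classical \emph{lower} estimate $u_p\geq \Psi_p^{-1}(R-\rho)$, consistent with \eqref{matero}.) To obtain the upper bound of the lemma you would need a lower bound on $\int_\rho^R u_p'/(p'F(u_p))^{1/p}\,dr$, which your argument never produces; note also that such a bound cannot hold pointwise in general, only in integrated (comparison-principle) form. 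A second, independent problem is the ``standard manipulation'' itself: from $(u_p')^{p-1}\leq \frac{\rho}{N}f(u_p)$ one gets $u_p'/(p'F(u_p))^{1/p}\leq C\rho^{1/(p-1)}\,f(u_p)^{1/(p-1)}/F(u_p)^{1/p}$, and under (H3) with $\overline q>p-1$ the ratio $f(t)^{1/(p-1)}/F(t)^{1/p}$ is unbounded as $t\to\infty$, so no bound by a function of $\rho$ alone follows.

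The paper avoids this trap by inverting the roles: it sets $w_p:=\Psi_p(u_p)$, which converts the blow-up problem into the homogeneous Dirichlet problem $\Delta_p w_p=\tilde F(w_p)[|\nabla w_p|^p-1]$, $w_p=0$ on $\partial B_R$, and then compares $w_p$ \emph{from below} with the explicit $p$-torsion-type function $w_p^0$ solving $\Delta_p w_p^0=-N^{2-p}/R$, whose formula is exactly the argument of $\Psi_p^{-1}$ in \eqref{ballboundeq}. Verifying that $w_p^0$ is a subsolution (done explicitly for $f(s)=cs^q$, then for general $f$ by comparison of the approximating solutions $u_{p,n}$ with those of the pure-power problem) gives $w_p\geq w_p^0$, hence $u_p=\Psi_p^{-1}(w_p)\leq \Psi_p^{-1}(w_p^0)$, with constants fully explicit in $p$. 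If you want to keep a radial-ODE flavor, you would still have to run a comparison argument of this type (Lieberman's change of variables, or an Osserman-style argument bounding the integral $\int_0^\rho s^{N-1}f(u_p)\,ds$ from \emph{below}); your present chain of inequalities cannot be repaired by bookkeeping of constants, because each step is an inequality in the wrong direction.
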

\begin{proof}{The existence and the stated regularity of the weak solution follows from \cite[Theorem 3.3]{mat}. However, we need to recall how this weak solution is obtained (see \cite{mat} for the details). We let $u_{p,n}\in W^{1,p}(\Omega)$ to be the unique solution to \begin{equation}
  \label{dirichaux} \left\{\begin{array}{cc}\Delta_p u_{p,n}=f(u_{p,n}) & {\rm in \ } \Omega \\ u_{p,n}=n & {\rm on \ }\partial\Omega\end{array}\right.
\end{equation}Then, since the sequence $\{u_{p,n}\}$ is increasing with respect to $n$, it converges  pointwise \bk to a function $u_p$. This function $u_p$ is a weak solution to \eqref{p-laplace-ff} with the stated regularity.}

 For \eqref{ballboundeq}, we closely follow the proof of \cite[Lemma 4.1]{lieber}. We let $$\tilde F(r):=\frac{f\circ \Psi_p^{-1}(r)}{(p'F\circ \Psi_p^{-1}(r))^\frac{1}{p'}}.$$ Then, $w_p:=\Psi_p(u_p)$ is a {distributional} solution to \begin{equation}\label{lieberinverse}
    \left\{\begin{array}
      {cc} \Delta_p w_{p}=\tilde F(w_{p})[|\nabla w_{p}|^p-1]\quad & {\rm in \ }B_R \\ \\ w_{p}=0 \quad & {\rm on \ }\partial B_R\,
    \end{array}\right..
  \end{equation}

   We consider now $w_p^0$ to be the solution to \begin{equation}\label{lieberaux}
    \left\{\begin{array}
      {cc} \displaystyle\Delta_p w_{p}^{0}=-\frac{N^{2-p}}{R}\quad & {\rm in \ }B_R \\ \\ w_{p}^{0}=0 \quad & {\rm on \ }\partial B_R\,.
    \end{array}\right.
  \end{equation}Then, $w_p^0$ is explicitly given by (see e.g. \cite{talenti}) $$w_p^0=\frac{1}{p'N}\left(R-\left(\frac{|x|^p}{R}\right)^\frac{1}{p-1}\right).$$
  We next show that $w_p^0$ is a subsolution to \eqref{lieberinverse}.

  \medskip
  In the case $f(s)=cs^{q}$,  a direct computation shows that $$\tilde F(r)=\frac{(p-1)(q+1)}{(q+1-p)  \bk  r}.$$ Therefore, $$\Delta_p w_p^0+\tilde F(w_p^0)(1-|\nabla w_0|^p)$$$$=-\frac{N^{2-p}}{R}+\frac{(q+1)p^\frac{1}{p'}N}{(q+1-p)\left(R-\left(\frac{|x|^p}{R}\right)^\frac{1}{p-1}\right)}
  \left(1-\frac{1}{N^p}\left(\frac{|x|}{R}\right)^{p'}\right)$$$$\geq -\frac{N^{2-p}}{R}+\frac{N}{\left(R-\left(\frac{|x|^p}{R}\right)^\frac{1}{p-1}\right)}
  \left(1-\frac{1}{N^p}\left(\frac{|x|}{R}\right)^{p'}\right)$$$$=\frac{N}{R}\left(\frac{\left(1-\frac{1}{N^p}\left(\frac{|x|}{R}\right)^{p'}\right)}
  {\left(1-\left(\frac{|x|}{R}\right)^{p'}\right)}-N^{1-p}\right)\geq 0.$$

From here, we  can proceed exactly as in  \cite[Lemma 4.1]{lieber} to conclude that $w_p\geq w_p^0$ in $B_R$. Then, \begin{equation}\label{bound} u_p=\Psi_p^{-1}(w_p)\leq\Psi_p^{-1}(w_p^0)\,.\end{equation}

\medskip

In the case of a general $f$ satisfying \rife{fabiana} we argue by comparison. We consider the approximating solutions {to} problem   \rife{p-laplace-ff}, that is the weak solutions $u_{p,n}$ to problem \eqref{dirichaux}
that exist and are unique (see for instance \cite[Theorem A]{mat}).
Observe that thanks to \rife{fabiana} we have that $u_{p,n}$ is a subsolution to problem
$$
 \begin{cases}
   -\Delta_p v_{p,n}  + cv_{p,n}^{\overline{q}} = 0 & \text{in}\ \Omega,\\
 v_{p,n}=n &\text{on}\ \partial\Omega,
  \end{cases}
$$
so that, by standard comparison, $u_{p,n}\leq v_{p,n}$, and the bound \rife{bound} is obtained as $n$ goes to $\infty$ as $u_{p}$ is the a.e. limit of $u_{p,n}$.
\end{proof}

\medskip

\noindent{\it Local $BV$ estimate and local estimate on the vector field.} We are in the position to give the essential estimates in order to pass to the limit in the approximating problems \rife{p-laplace-ff}. We prove the following:
\begin{theorem}
 Let $0<r<R$, then it holds,
\begin{equation}\label{lbv}
\int_{B_{r}} |\nabla u_{p}|\leq \tilde{C}_{r,R},
\end{equation}
and for any $1<q<p'$,
\begin{equation}\label{lvf}
\int_{B_{r}} |\nabla u_{p}|^{(p-1)q}\leq \tilde{C}_{r,R,q}\,.
\end{equation}
\bk
\end{theorem}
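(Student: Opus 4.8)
The plan is to derive both estimates from the a priori information already collected, exploiting that the Keller--Osserman bound keeps everything nondegenerate as $p\to 1^+$. The key point is that \rife{lbv} is essentially free from Lemma \ref{globT} once we control the measure of the superlevel sets of $u_p$, while \rife{lvf} requires combining the energy estimate \rife{energy} with the pointwise upper bound from Lemma \ref{ballbound} via H\"older's inequality.

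First I would prove \rife{lbv}. Splitting $\nabla u_p = \nabla T_k(u_p) + \nabla (u_p - T_k(u_p))$ is awkward since $u_p$ is large near the boundary, so instead I would work on $B_r$ directly and write $\int_{B_r}|\nabla u_p| = \int_{B_r}|\nabla T_k(u_p)| + \int_{B_r \cap [u_p>k]} |\nabla u_p|$ for a fixed $k$. The first term is bounded by $C(kf(k))^{1/p}$ using Lemma \ref{globT} (the global $BV$ bound). For the second term I would use H\"older with exponent $p$ on $B_r$: $\int_{B_r\cap[u_p>k]}|\nabla u_p| \leq \big(\int_{B_r}|\nabla u_p|^p\big)^{1/p}\,|B_r\cap[u_p>k]|^{1/p'}$, and invoke the local energy estimate \rife{energy}. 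To make the whole thing uniform in $p$ near $1$, I would fix a single value of $k$ (say $k=1$) so that $(kf(k))^{1/p}$ and the constants $C_{r,R}$ from \rife{energy} stay bounded as $p\to 1^+$; since $1/p\to 1$ and $1/p'\to 0$, all the $p$-dependent exponents converge and the constants remain finite.

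Next I would prove \rife{lvf}. Fix $1<q<p'$. The natural route is H\"older applied to $\int_{B_r}|\nabla u_p|^{(p-1)q}$: writing $(p-1)q = p\cdot\frac{(p-1)q}{p}$ and noting $\frac{(p-1)q}{p}<\frac{(p-1)p'}{p}=1$, we have $\int_{B_r}|\nabla u_p|^{(p-1)q} \leq \big(\int_{B_r}|\nabla u_p|^p\big)^{\frac{(p-1)q}{p}}\,|B_r|^{1-\frac{(p-1)q}{p}}$, and the first factor is controlled by \rife{energy}. The exponent $\frac{(p-1)q}{p}$ is strictly less than $1$ precisely because $q<p'$, which is exactly the stated range, so the measure factor carries a positive power of $|B_r|$ and stays bounded. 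As $p\to 1^+$, $(p-1)q\to 0$, so the integrand tends to $1$ and the bound degenerates harmlessly to $|B_r|$; the constant $\tilde C_{r,R,q}$ can be taken uniform in $p$ for $p$ close to $1$.

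The main obstacle, and the reason the statement explicitly tracks all constants, is ensuring uniformity as $p\to 1^+$ rather than the mere finiteness of the integrals for each fixed $p$. Concretely, the constants $C_{r,R}$ in \rife{energy} come from Lemma \ref{tec} and the controlled Sobolev inequality \rife{sobo}, whose constant $\big(\frac{2N-3}{3N-2}\big)^2$ was deliberately chosen to be $p$-independent; I would need to verify that the term $\frac{C}{R-r}|B_R|$ appearing in the proof of Theorem \ref{5.3}, together with the local boundedness from Lemma \ref{ballbound}, yields a bound that does not blow up as $p\to 1^+$. The delicate check is that $\Psi_p^{-1}$ and the factor $(kf(k))^{1/p}$ behave well in this limit; once a fixed truncation level is chosen and the hypothesis \rife{fabiana} is used to control $f(k)k$, the exponents $1/p$, $1/p'$, and $\frac{(p-1)q}{p}$ all converge and the estimates close uniformly.
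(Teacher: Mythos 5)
Your proposal is correct and takes essentially the same route as the paper: both estimates follow from H\"older's inequality applied to the local energy estimate \rife{energy}, together with the observation (as in Theorem \ref{5.3}) that the resulting constants stay uniformly bounded as $p\to 1^+$; in particular your treatment of \rife{lvf} is identical to the paper's, since your exponent $\frac{(p-1)q}{p}$ equals $\frac{q}{p'}$. The only deviation is your truncation split for \rife{lbv}, which is superfluous: the direct H\"older bound $\int_{B_{r}}|\nabla u_{p}|\leq \left(\int_{B_{r}}|\nabla u_{p}|^{p}\right)^{\frac1p}|B_{r}|^{\frac{1}{p'}}$ already yields the claim without invoking Lemma \ref{globT}.
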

\begin{proof}
Using H\"older{'s} inequality  and \rife{energy}, we have
$$
\int_{B_{r}} |\nabla u_{p}|\leq \left( \int_{B_{r}}|\nabla u_{p}|^{p}  \right)^{\frac1p}|B_{r}|^{\frac{1}{p'}}\leq \tilde{C}_{r,R}.
$$
Furthermore, in the same way, we have, for {any $q<p'$},
$$
\int_{B_{r}} |\nabla u_{p}|^{(p-1)q}\leq \left(\int_{B_{r}} |\nabla u_{p}|^{p}  \right)^{\frac{q}{p'}}|B_{r}|^{\frac{p'-q}{p'}}\leq \tilde{C}_{r,R,q}.
$$
 Observe that, by virtue of Theorem \ref{5.3}, the constants $\tilde{C}_{r,R}$ and $\tilde{C}_{r,R,q}$, that in principle do depend on $p$, are uniformly controlled, and so they can be chosen to be independent of  this parameter as $p$ approaches $1$.

\end{proof}
\color{black}
\subsection{Passage to the limit}

 Observe that, from both the $L_{\loc}^{\infty}$ bound on $u_{p}$   and \rife{lbv} we deduce that, in particular, the sequence $\{u_{p}\}$ is locally bounded in $W^{1,1}(\Omega)$, so that we can find a subsequence, not relabeled, such that $u_p$ converges in $L^1_{loc}(\Omega)$ and a.e. in $\Omega$ to a function $u\in L^1_{loc}(\Omega)$.
 As a first step, we need to explicit the  $L^{\infty}$ bound on $u_{p}$ in order to obtain a global bound on the limit  $u$ .
\medskip

{\noindent\it Global $L^\infty$ bound on $u$.} Let $s_\Omega$ denote the radius given by the uniform interior ball condition. Then, we can cover $\Omega$ with interior balls with radius bigger than or equal to $s_\Omega$. By Lemma \ref{ballbound}, we have that $$u_p\leq \Psi_p^{-1}\left(\frac{1}{p'N}\left(s_\Omega-\left(\frac{|x-x_0|^p}{s_\Omega}\right)^\frac{1}{p-1}\right)\right)\quad {\rm for \ any \ } x\in B_{s_\Omega}(x_0)\subset\Omega.$$

Let us consider, without loss of generality, the case $f(s)=s^q$. Reasoning as before, the result for a general $f$ satisfying \rife{fabiana} will  easily follow by comparison.

In this case we have,
$$\Psi_p^{-1}(s)=\left(\left(\frac{(q+1)}{p'}\right)^\frac{1}{p}\frac{{ p}}{(q+1-p)s}\right)^\frac{p}{q+1-p}.$$

Then, $$u_p\leq
\left(\frac{(q+1)^\frac{1}{p} { p}{p'}^\frac{1}{p'} N}{(q+1-p)\left(s_\Omega-\left(\frac{|x-x_0|^p}{s_\Omega}\right)^\frac{1}{p-1}\right)}\right)^\frac{p}{q+1-p}\stackrel{p\to 1^+}\to \left(\frac{(q+1)N}{q\ s_\Omega}\right)^\frac{1}{q}\,,$$
that is,
$$u\leq \left(\frac{(q+1)N}{q\ s_\Omega}\right)^{\frac1q}\,,\quad {\rm a.e \ } x\in\Omega.$$
In the general case we have,
 \begin{equation}\label{exactbound}f(u)\leq \frac{(q+1)N}{q\ s_\Omega}\,,\quad {\rm a.e \ } x\in\Omega.\end{equation}

\medskip
{\noindent\it Convergence of the term $|\nabla u_p|^{p-2}\nabla u_p$.}
 Observe that by \eqref{lbv} and \eqref{T-energy} $u\in TBV_{loc}(\Omega)$. As, by what we {have just showed},  $u\in L^\infty(\Omega)$ we deduce that  $u\in BV(\Omega)$.

Furthermore, let $\omega\subset\subset\Omega$. We have that $|\nabla u_p|^{p-2}\nabla u_p$ is weakly  relatively compact in $L^1(\omega;\re^N)$. This is an easy consequence of \rife{lvf}. \bk  In particular,  we may assume that there exists $\z_\omega\in L^1(\omega,\R^N)$ such that $$|\nabla u_p|^{p-2}\nabla u_p\rightharpoonup \z_\omega \,,\quad {\rm as \ } p\to 1^+\, {\rm \ \   weakly \ \bk in \ } L^1(\omega,\R^N).$$ \bk
Following the proof of \cite[Lemma 1]{ACM-N}, we can prove that $\|\z_\omega\|_\infty\leq 1$. Moreover,  by a diagonal argument we can find $\z\in L^{\infty}(\Omega,\R^N)$ with $\|\z\|_\infty\leq 1$ and a subsequence (not relabeled) such that
$$|\nabla u_p|^{p-2}\nabla u_p\rightharpoonup \z \,,\quad {\rm as \ } p\to 1^+\, {\rm \ \ weakly \ in \ } L^1_{loc}(\Omega,\R^N).$$
On the other hand, taking $\varphi\in C_0^\infty(\omega)$ in \eqref{defdleq} and letting $p\to 1^+$ we obtain that $$\dive \z=f(u)\,\quad {\rm in \ } \mathcal D'(\omega).$$Finally, using \eqref{exactbound}, we deduce that $$\dive \z=f(u)\,\quad {\rm in \ } \mathcal{D}'(\Omega).$$

We now use the lower semicontinuity in $L^1(\Omega)$ (see for instance \cite{Anzellotti1}) of the energy functional defined by $$\mathcal F_k(v):=\left\{\begin{array}
  {cc} \displaystyle \int_\Omega |Dv|+\int_{\partial\Omega} |k-v|\,d\mathcal H^{N-1} & {\rm if  \ } v\in BV(\Omega) \\ \\+\infty & {\rm otherwise}
\end{array}\right.$$
 As we already pointed out,  Lemma \ref{globT} allow{s} us to deduce the relative strong compactness in $L^{1}(\Omega)$ of $T_{k}(u_{p})$, so that, reasoning as in Lemma \ref{globT} and using Young's inequality, we have that   $$\int_\Omega |DT_k(u)|+\int_{\partial\Omega} |k-T_k(u)|\,d\mathcal H^{N-1}\leq \liminf_{p\to 1^+}\int_\Omega |DT_k(u_p)|$$
$$
\leq \liminf_{p\to 1^+}\frac{1}{p}\int_\Omega |DT_k(u_p)|^{p} + \lim_{p\to 1^+} \frac{|\Omega|}{p'}
$$
$$=\liminf_{p\to 1^+}\frac{1}{p}\int_\Omega f(u_p)(k-T_k(u_p))\,dx=\int_\Omega f(u)(k-T_k(u))\,dx$$$$=\int_\Omega \dive \z (k-T_k(u))\,dx=\int_\Omega -(\z,DT_k(u))+\int_{\partial\Omega}[\z,\nu](k-T_k(u))\,d\mathcal H^{N-1}$$$$\leq \int_\Omega |DT_k(u)|+\int_{\partial\Omega}|k-T_k(u)|\,d\mathcal H^{N-1}.$$\bk

From here, since the measures $|DT_k(u)|$ are supported in $\Omega$, letting $k\geq f^{-1}(\frac{(q+1)N}{q s_\Omega})$ we obtain that $$ (\z,Du)(\Omega)=|Du|(\Omega)\, {\rm \ and}$$$$[\z,\nu]=1\,,\qquad \mathcal H^{N-1}-{\rm a.e. \ in \ }\partial\Omega .$$Therefore, by \eqref{acota} {and Remark \ref{largemax}}, $u$ is a large solution of \eqref{maineq3} in the sense of Definition 3.2.\bk

\begin{remark}\label{rem-g}
Observe that, a straightforward modification of our arguments in Section \ref{sec.p1} allows us to easily extend the result of existence and uniqueness of large solutions\ to a larger class of  nonhomogeneous problems of the type
$$
\left\{\begin{array}
    {cc} \displaystyle -\div\left(\frac{Du}{|Du|}\right)+f(u)=g(x) \quad & {\rm in \ } \Omega \\ \\ u=+\infty & {\rm on \ }\partial\Omega\,,
  \end{array}\right.
$$
with $g\in L^{1}(\Omega)$ (cfr. with \rife{maineqv2}).  On the other hand, in order to deal with the stability result contained in the last section we have to restrict the class of data  $g$ to $L^{m}(\Omega)$, with $m>N$ in order to get locally bounded approximating solutions.

\end{remark}

We would like to conclude by showing that the upper bounds obtained in \eqref{exactbound} and \eqref{exactboundexp} are or can be taken to be the optimal ones in some particular cases.

\begin{remark}
Let us consider, as a model, the case of $f(s)=u^q$ for some $q>\frac{1}{N-1}$. With a small modification of our argument we can show that,  in this case, we can find an optimal   upper bound for $u$. In fact, if we take $\tilde w_p  ^0$ to be the solution to \begin{equation}\label{lieberaux2}
    \left\{\begin{array}
      {cc} \displaystyle\Delta_p \tilde w_{p}^{0}=-\frac{N^{2-p}}{R}\left(\frac{q+1}{q+1-p}\right)^\frac{1}{p'}\quad & {\rm in \ }B_R \\ \\ \tilde w_{p}^{0}=0 \quad & {\rm on \ }\partial B_R
    \end{array}\right.
  \end{equation} and we follow the proof of Lemma \ref{ballbound}, using $q>\frac{1}{N-1}$, we get that for $p$ sufficiently close to $1$, $$u_p\leq \Psi^{-1}(\tilde w_p^0)\stackrel{p\to 1^+}\to \left(\frac{N}{R}\right)^\frac{1}{q}.$$Therefore,  in this case  the procedure to bound the approximate solutions ends up with the  exact constant  as it can be deduced by Remark \ref{ex1} together with the existence result given by  Theorem \ref{exgenf}\bk. \bk
As a second example, the bound is optimal also for the exponential case $f(s)=e^{s}$ (whose solution exists and it unique as,  in view of Remark \ref{rem-g}, we can subtract and add the constant 1 in order to satisfy our assumptions on $f$). In fact,  in this case,  $\Psi_p^{-1}(r)=\log\left(\frac{(p-1)p^{p-1}}{r^p}\right)$ and $\tilde F(r)=\frac{p-1}{r}$. Then, a direct computation shows that
 \begin{equation}\label{exactboundexp}u_p(r)\leq \log\left(\frac{p^{2p-1}N^p}{(p-1)^{p-1}\left(s_\Omega-\frac{|x|^p}{s_\Omega}^\frac{1}{p-1}\right)^p}\right)\stackrel{p\to 1^+}\longrightarrow \log\left(\frac{N}{s_\Omega}\right)\end{equation}

\end{remark}

\noindent{\bf Acknowledgements.} {First author} acknowledge partial support by project  MTM2012-31103. The authors want to thank J. C. Sabina de Lis for fruitful discussions about the subject and for providing them with reference \cite{mat}.


\end{document}